\DeclareMathOperator{\sgn}{sgn}
\newcommand{\CD}[2][\alpha]{{}^CD^#1_#2}
\newcommand{\ceil}[1]{\lceil #1 \rceil}
\newcommand{\gab}{\gamma_{\alpha,\beta}}
\title{Convergence Analysis of Fractional Gradient Descent}
\author{%
  Ashwani Aggarwal \\
  Department of Computer Science \\
  University of California, Los Angeles \\
  \texttt{ashraggarwal@ucla.edu} \\
}
\begin{document}
\maketitle
\begin{abstract}
    Fractional derivatives are a well-studied generalization of integer order derivatives.  Naturally, for optimization, it is of interest to understand the convergence properties of gradient descent using fractional derivatives.  Convergence analysis of fractional gradient descent is currently limited both in the methods analyzed and the settings analyzed.  This paper aims to fill in these gaps by analyzing variations of fractional gradient descent in smooth and convex, smooth and strongly convex, and smooth and non-convex settings.  First, novel bounds will be established bridging fractional and integer derivatives.  Then, these bounds will be applied to the aforementioned settings to prove linear convergence for smooth and strongly convex functions and $O(1/T)$ convergence for smooth and convex functions.  Additionally, we prove $O(1/T)$ convergence for smooth and non-convex functions using an extended notion of smoothness - H\"older smoothness - that is more natural for fractional derivatives.  Finally, empirical results will be presented on the potential speed up of fractional gradient descent over standard gradient descent as well as some preliminary theoretical results explaining this speed up.
\end{abstract}

\section{Introduction}
Fractional derivatives \citep{fractional_history}, \cite{oldham1974fractional}, \citep{LUCHKO2023133906} as a generalization of integer order derivatives are a much studied classical field with many different variations.  One natural question to ask is if they can be utilized in optimization similar to gradient descent which utilizes integer order derivatives.

\begin{figure}[h]
    \centering
    \includegraphics[width=100mm]{./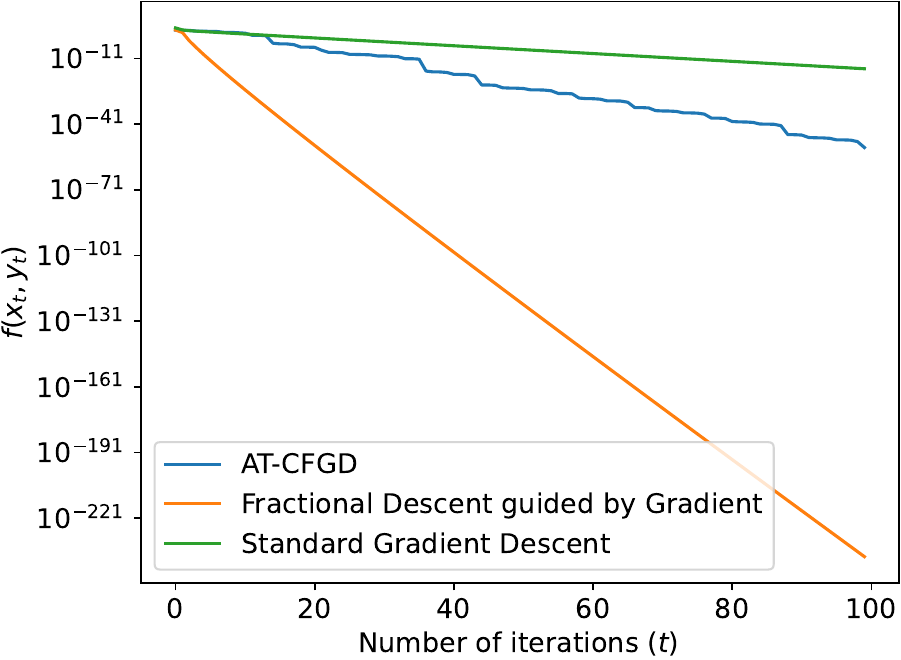}
    \caption{Convergence of descent methods on function $f(x,y) = 10x^2+y^2$ beginning at $x=1, y=-10$.  In all cases, the optimal (not theoretical) step size is used.  AT-CFGD is as described in \cite{shin2021caputo} with $x^{(-1)} = 1.5, y^{(-1)} = -10.5$, $\alpha = 1/2$, $\beta = -4/10$.  Fractional Descent guided by Gradient is the method discussed in Corollary~\ref{cor:beta_unified_low} with $\alpha = 1/2$, $\beta = -4/10$, $\lambda_t = -\frac{0.0675}{(t+1)^{0.2}}$ in $x_t-c_t = -\lambda_t \nabla f(x_t)$.}
    \label{fig:convergence}
\end{figure}

To motivate the usefulness of fractional gradient descent, we can observe from experiments in \cite{shin2021caputo} that their Adaptive Terminal Caputo Fractional Gradient Descent (AT-CFGD) method is capable of empirically outperforming standard gradient descent in convergence rate.  In addition, they showed that training neural networks based on their AT-CFGD method can give faster convergence of training loss and lower testing error.    Figure~\ref{fig:convergence} depicts convergence on a quadratic function for standard gradient descent as well as AT-CFGD and the method in Corollary~\ref{cor:beta_unified_low} labeled Fractional Descent guided by Gradient.  For specifically picked hyperparameters, both of these fractional methods can significantly outperform standard gradient descent.  This suggests that study on the application of fractional derivatives to optimization has a lot of potential.

The basic concept of a fractional derivative is a combination of integer-order derivatives and fractional integrals (since there is an easy generalization for integrals through Cauchy repeated integral formula).  The fractional derivative that will be studied here is the Caputo Derivative since it has nice analytic properties.  The definition from \cite{shin2021caputo} is as follows where $\Gamma$ is the gamma function generalizing the factorial (many texts give the definition only for $x>c$, but this will be extended later on).

\begin{definition}[Left Caputo Derivative]
    For $x>c$, the (left) Caputo Derivative of $f:\mathbb{R}\to\mathbb{R}$ of order $\alpha$ is ($n = \ceil{\alpha}$):
    \begin{align*}
        \CD{c}{}^{+}f(x)=\frac{1}{\Gamma(n-a)}\int_c^x \frac{f^{n}(t)}{(x-t)^{\alpha-n+1}}dt.
    \end{align*}
\end{definition}

The main contributions of this paper are highlighted as follows:
\begin{enumerate}
  \item First, we establish novel inequalities that connect fractional derivatives to integer derivatives.  This is important since properties like smoothness, convexity, and strong convexity are expressed in terms of gradients (first derivatives).  Without these inequalities, assuming these properties would be meaningless from the perspective of a fractional derivative.
  \item Next, we apply these inequalities to derive convergence results for smooth and strongly convex functions.  In particular, the fractional gradient descent method we examine is a variation of the AT-CFGD method of \cite{shin2021caputo}.  Theorem~\ref{thm:beta_unified_low} gives a linear convergence rate for this method on different hyperparameter domains for single dimensional functions.  Corollary~\ref{cor:beta_unified_low} extends these results to higher dimensional functions that are sums of single dimensional functions.  Lastly, Theorem~\ref{thm:beta_unified} gives linear convergence results for general higher dimensional functions.
  \item Continuing onwards, we examine smooth and convex functions.  In particular, if hyperparameters satisfy certain assumptions, Theorem~\ref{thm:smooth_separable} and Theorem~\ref{thm:smooth} give a $O(1/T)$ convergence rate for the fractional gradient method similar to standard gradient descent.
  \item The last setting we examine is smooth, but non-convex functions.  For this setting, we use an extension of standard smoothness - H\"older smoothness - in which standard gradient descent needs varying learning rate to converge.  We establish a general $O(1/T)$ convergence result to a stationary point in Theorem~\ref{thm:Lp_smooth} and show that fractional gradient descent with well chosen hyperparameters is more natural for optimizing H\"older smooth functions.
  \item Finally, we present results which show potential for fractional gradient descent speeding up convergence compared to standard gradient descent.  One main point of inquiry is how fractional gradient descent, in some cases, is able to significantly beat the theoretical worse case rates derived (which are at best as good as gradient descent).  Empirically, it seems that this can be explained by the optimal learning rate far exceeding the theoretical learning rate.  Another interesting example that is explored is how functions with the same amount of smoothness and strong convexity might have different preferences between fractional and standard gradient descent.  Lastly, some basic theoretical results on this speed up are given for quadratic functions.
\end{enumerate}

\section{Related Work}
Fractional gradient descent is an extension of gradient descent so its natural context is in the literature surrounding gradient descent.  Gradient descent as an idea is classical, however, there are a number of variations some of which are very recent \citep{DBLP:journals/corr/Ruder16}.  One variation is acceleration algorithms including momentum which incorporates past history into the update rule and Nesterov's accelerated gradient which improves on this by computing the gradient with look-ahead based on history.  Another line of variation building on this is adaptive learning rates with algorithms including Adagrad, Adadelta, and the widely popular Adam \citep{kingma2017adam}.  There is also a descent method combining the ideas of Nesterov's accelerated gradient and Adam called Nadam.   

Moving to fractional gradient descent, it is not possible to simply replace the derivative in gradient descent with a fractional derivative and expect convergence to the optimum.  This is because, as discussed in \cite{WEI20202514} and \cite{short_memory}, the point at which fractional gradient descent converges is highly dependent on the choice of terminal, $c$, and may not have zero gradient if $c$ is fixed.  This leads to a variety of methods discussed in \cite{WEI20202514} and \cite{shin2021caputo} to vary the terminal or order of the derivative to achieve convergence to the optimum point.  Later on, the former will be done in order to guarantee convergence.  Other papers  take a completely different approach like \cite{https://doi.org/10.1002/mma.7127} which opts to generalize gradient flow by changing the time derivative to a fractional derivative thus bypassing these problems.

One reason why there are so many different approaches across the literature is that fractional derivatives can be defined in many different ways \citep{fractional_history} (the most commonly talked about include the Caputo derivative used in this paper as well as the Riemann-Liouville derivative).  Some papers like \cite{SHENG202042} also choose simply to take a 1st degree approximation of the fractional derivative which can be expressed directly in terms of the 1st derivative.  There are also further variations such as taking convex combinations of fractional and integer derivatives for the descent method like in \cite{Khan2018}.  Finally, there are extensions combining fractional gradient descent with one of the aforementioned variations on gradient descent.  One example of this is in \cite{Fractional_Adam} which extends the results of \cite{shin2021caputo} to propose a fractional Adam optimizer. 

To provide further motivation for the usefulness of this field, there are many papers studying the application of fractional gradient descent methods on neural networks and other machine learning problems.  For example, \cite{HAN2023127944} and \cite{caputo_bp} have shown improved performance when training back propagation neural networks with fractional gradient descent.  In addition, other papers like \cite{WANG2022366} and \cite{SHENG202042} have trained convolutional neural networks and shown promising performance on the MINST dataset.  Applications to further models have also been studied in works like \cite{Khan2018} which studied RBF neural networks and \cite{Tang2023} which looked at optimizing FIR models with missing data. 

In general, when reading through the literature, many fractional derivative methods have only been studied theoretically for a specific class of functions like quadratic functions in \cite{shin2021caputo} or lack strong convergence guarantees like in \cite{WEI20202514}.  Detailed theoretical results like those of \cite{https://doi.org/10.1002/mma.7127} and \cite{caputo_bp} are fairly rare or limited.  Thus, one main goal of this paper is to develop methodology for proving theoretical convergence results in more general smooth, convex, or strongly convex settings.  As an interesting aside, fractional derivatives are generally defined by integration which means they fall under the field of optimization called nonlocal calculus which has been studied in general by \cite{nagaraj2021optimization}.

\section{Relating Fractional Derivative and Integer Derivative}\label{Relation}
Before beginning the theoretical discussion, it is important to note that for the most part, all of the setup will be done in terms of single-variable functions.  Although this might appear odd, due to how the fractional gradient in higher dimensions is defined, when generalizing to higher dimensional functions much of the math ends up being coordinate-wise.  In fact, all of the later results will generalize to higher dimensions following very similar logic as the single dimension case.

Before presenting bounds relating fractional and integer derivatives, we need to extend the definition of the fractional derivative to $x<c$.   For the extension, the definition of the right Caputo Derivative from \cite{shin2021caputo} is used:
\begin{definition}[Right Caputo Derivative]
    For $x<c$, the right Caputo Derivative of $f:\mathbb{R}\to\mathbb{R}$ of order $\alpha$ is ($n = \ceil{\alpha}$):
    \begin{align*}
        \CD{c}{}^{-}f(x)=\frac{(-1)^n}{\Gamma(n-a)}\int_x^c \frac{f^{n}(t)}{(t-x)^{\alpha-n+1}}dt.
    \end{align*}
\end{definition}
From this, we can unify both the left and right Caputo Derivatives into one definition.
\begin{definition}[Caputo Derivative]
    The Caputo Derivative of $f:\mathbb{R}\to\mathbb{R}$ of order $\alpha$ is ($n = \ceil{\alpha}$):
    \begin{align*}
        \CD{c}f(x)=\frac{(\sgn(x-c))^{n-1}}{\Gamma(n-a)}\int_c^x \frac{f^{n}(t)}{|x-t|^{\alpha-n+1}}dt
    \end{align*}
    where $\sgn$ is the sign function.
\end{definition}
In order to motivate calling this a fractional derivative, we can compute limits as the order of the derivative tends to an integer following the logic of 5.3.1 in \cite{ATANGANA201879}.
\begin{theorem}
    Choose some $\alpha\in\mathbb{R}$ and let $n = \ceil{\alpha}$.  Suppose $f:\mathbb{R}\to\mathbb{R}$ is $n$ times differentiable and $f^n(t)$ is absolutely continuous throughout the interval $[\min(x,c),\max(x,c)]$.  Then,
    \begin{itemize}
        \item $\lim_{\alpha\to n} \CD{c}f(x) = \sgn(x-c)^nf^n(x)$,
        \item $\lim_{\alpha\to n-1} \CD{c}f(x) = \sgn(x-c)^{n-1}(f^{n-1}(x)-f^{n-1}(c))$.
    \end{itemize}
    \label{thm:limit}
\end{theorem}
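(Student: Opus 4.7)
The plan is to reduce to the case $x > c$ (the case $x < c$ is symmetric, using the right-sided form of the Caputo derivative) and then treat the two limits separately. Note that throughout both limits $\alpha \in (n-1, n]$, so $\ceil{\alpha} = n$ stays constant and the definition is unambiguous. For $x > c$, the definition simplifies to the Riemann--Liouville fractional integral of order $n-\alpha$ applied to $f^n$:
\begin{align*}
\CD{c}f(x) = \frac{1}{\Gamma(n-\alpha)} \int_c^x (x-t)^{n-\alpha-1} f^n(t) \, dt.
\end{align*}

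For the first limit, set $\epsilon = n - \alpha \to 0^+$. The key trick is to decompose $f^n(t) = f^n(x) + (f^n(t) - f^n(x))$. The constant piece integrates exactly via $\int_c^x (x-t)^{\epsilon-1}\,dt = (x-c)^\epsilon/\epsilon$ and the identity $\epsilon\,\Gamma(\epsilon) = \Gamma(\epsilon+1)$, yielding $f^n(x)(x-c)^\epsilon / \Gamma(\epsilon+1) \to f^n(x)$. The remainder term $\frac{1}{\Gamma(\epsilon)}\int_c^x (x-t)^{\epsilon-1}(f^n(t)-f^n(x))\,dt$ I would show vanishes by splitting the interval at $x-\eta$: on $[x-\eta, x]$, use the modulus of continuity of $f^n$ (available since absolute continuity gives continuity) to bound the integrand uniformly by $\omega(\eta)$, giving a bound $\omega(\eta)\eta^\epsilon/\Gamma(\epsilon+1)$; on $[c, x-\eta]$, bound $|f^n(t)-f^n(x)|$ by $2\sup|f^n|$ (finite since $f^n$ is continuous on a compact interval) and integrate to obtain a bound proportional to $((x-c)^\epsilon - \eta^\epsilon)/\Gamma(\epsilon+1)$, which tends to $0$ as $\epsilon \to 0^+$ for fixed $\eta$. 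Choosing $\eta$ small then $\epsilon$ small completes the argument. Since $\sgn(x-c)^n = 1$ when $x > c$, this matches the claim.

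For the second limit, set $\delta = \alpha - n + 1 \to 0^+$. Here $\Gamma(n-\alpha) = \Gamma(1-\delta) \to 1$ and the kernel $(x-t)^{-\delta} \to 1$ pointwise on $(c,x)$. A dominated convergence argument applies: for $\delta \le 1/2$, one has the uniform bound $(x-t)^{-\delta} \le \max(1,(x-t)^{-1/2})$, which is integrable on $(c,x)$, and $|f^n|$ is bounded on the compact interval. Passing to the limit gives $\int_c^x f^n(t)\,dt$, which equals $f^{n-1}(x) - f^{n-1}(c)$ by the fundamental theorem of calculus for absolutely continuous $f^{n-1}$. For $x > c$, $\sgn(x-c)^{n-1} = 1$, so this matches. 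The case $x < c$ is handled by rewriting $\CD{c}f(x)$ as $(-1)^n$ times a right-sided Riemann--Liouville integral on $[x,c]$ and repeating both arguments, which produces the sign factors $\sgn(x-c)^n = (-1)^n$ and $\sgn(x-c)^{n-1} = (-1)^{n-1}$ automatically.

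The main obstacle is the first limit: both $1/\Gamma(\epsilon) \to 0$ and the kernel $(x-t)^{\epsilon-1}$ develops a non-integrable singularity, so naive pointwise/dominated convergence fails and one must exploit the cancellation through the $f^n(t) - f^n(x)$ subtraction. The second limit is essentially routine once one has a dominating function.
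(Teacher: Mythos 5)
Your proof is correct, but it takes a genuinely different route from the paper's. The paper proves both limits at once by integrating by parts: writing $\int_c^x f^n(t)\,|x-t|^{n-\alpha-1}\,dt$ with $u=f^n(t)$ produces the identity
\begin{align*}
\CD{c}f(x)=\frac{\sgn(x-c)^{n}}{\Gamma(n-\alpha+1)}\left[f^n(c)|x-c|^{n-\alpha}+\int_c^x f^{n+1}(t)\,|x-t|^{n-\alpha}\,dt\right],
\end{align*}
in which the kernel $|x-t|^{n-\alpha}$ is \emph{bounded}, so both limits $\alpha\to n$ and $\alpha\to n-1$ follow by passing the limit inside the integral and applying the fundamental theorem of calculus to $f^{n+1}$. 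This is where the absolute-continuity hypothesis earns its keep: it guarantees $f^{n+1}$ exists a.e., is integrable, and satisfies $\int_c^x f^{n+1}=f^n(x)-f^n(c)$, which is exactly what makes the integration by parts legitimate. You instead work directly with the singular kernel: for $\alpha\to n$ you subtract the constant $f^n(x)$, evaluate the resulting Beta-type integral exactly via $\epsilon\,\Gamma(\epsilon)=\Gamma(\epsilon+1)$, and kill the remainder with a modulus-of-continuity/approximate-identity splitting; for $\alpha\to n-1$ you use dominated convergence with the dominating function $\max(1,(x-t)^{-1/2})$. Your route buys something real: the first limit needs only continuity of $f^n$ (absolute continuity enters your argument only through continuity and the classical FTC for $f^{n-1}$), so your hypotheses are effectively weaker, and you justify the limit interchanges explicitly, whereas the paper asserts them --- in particular the paper's treatment of $\alpha\to n-1$ ``directly from the definition'' silently interchanges limit and integral over a kernel that is singular for $\alpha>n-1$, which your DCT bound supplies. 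What the paper's approach buys in exchange is economy: a single two-line identity from which both limits drop out, at the cost of invoking the full strength of absolute continuity. Your handling of $x<c$ via the $(-1)^n$-weighted right-sided integral, and the resulting sign bookkeeping $\sgn(x-c)^n$ and $\sgn(x-c)^{n-1}$, matches the statement correctly.
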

\begin{proof}
    Proof is via integration by parts, see Appendix~\ref{Limit_Proof} for details.
\end{proof}
One interesting point of this theorem is that for odd $n$, a extra $\sgn(x-c)$ term appears in the upper limit in addition to the ordinary $n$th derivative.  This will end up motivating coefficients that are proportional to $\sgn(x-c)$ to cancel this term out.

Next, we present a key theorem relating the first derivative with the fractional derivative.  To do so, we modify Proposition 3.1 of \cite{https://doi.org/10.1002/mma.7127} with the extended domain of $x<c$.
\begin{theorem}[Relation between First Derivative and Fractional Derivative]
    Suppose $f:\mathbb{R}\to\mathbb{R}$ is continuously differentiable.  Let $\alpha \in (0,1]$.  Define $\zeta_x(t)$ as:
    \begin{align*}
        \zeta_x(t) = f(t) - f(x) - f'(x)(t-x).
    \end{align*}
    Then, we have:
    \begin{align*}
        \CD{c}f(x) - \frac{f'(x)(x-c)}{\Gamma(2-\alpha)|x-c|^\alpha} = \frac{-\zeta_x(c)}{\Gamma(1-\alpha)|x-c|^\alpha} - \frac{\alpha\sgn(x-c)}{\Gamma(1-\alpha)}\int_c^x \frac{\zeta_x(t)}{|x-t|^{\alpha+1}}dt.
    \end{align*}
    \label{thm:relation}
\end{theorem}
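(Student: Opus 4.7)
The plan is to exploit the fact that since $\alpha\in(0,1]$, we have $n=\lceil\alpha\rceil=1$, so the Caputo derivative collapses to the single integral $\CD{c}f(x)=\frac{1}{\Gamma(1-\alpha)}\int_c^x \frac{f'(t)}{|x-t|^\alpha}\,dt$. I would then decompose $f'(t)=f'(x)+\zeta_x'(t)$ (noting that $\zeta_x'(t)=f'(t)-f'(x)$), split the integral into two pieces, and show that the $f'(x)$-piece reproduces the correction term on the left-hand side while the $\zeta_x'$-piece yields the right-hand side after one integration by parts.

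For the first piece, the task is to evaluate $\int_c^x |x-t|^{-\alpha}\,dt$. Doing the $x>c$ and $x<c$ cases separately (the substitution $u=x-t$ or $u=t-x$ handles each), both collapse into the unified expression $\sgn(x-c)\,|x-c|^{1-\alpha}/(1-\alpha)=(x-c)/\bigl((1-\alpha)|x-c|^\alpha\bigr)$. Multiplying by $f'(x)/\Gamma(1-\alpha)$ and using $\Gamma(2-\alpha)=(1-\alpha)\Gamma(1-\alpha)$ recovers exactly the subtracted term $f'(x)(x-c)/(\Gamma(2-\alpha)|x-c|^\alpha)$, moving it cleanly to the left-hand side.

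For the second piece, I would integrate by parts with $u=|x-t|^{-\alpha}$ and $dv=\zeta_x'(t)\,dt$. One checks that $\tfrac{d}{dt}|x-t|^{-\alpha}=\alpha\sgn(x-t)|x-t|^{-\alpha-1}$ (verified by splitting into $t<x$ and $t>x$). The boundary term at $t=x$ vanishes since continuous differentiability of $f$ gives $\zeta_x(t)=o(|t-x|)$ as $t\to x$, dominating $|x-t|^{-\alpha}$ for $\alpha<1$. The boundary term at $t=c$ produces $-\zeta_x(c)/|x-c|^\alpha$. Finally, on the open interval of integration, $\sgn(x-t)$ is constant and equal to $\sgn(x-c)$ (since $t$ lies strictly between $c$ and $x$), so the factor pulls outside the integral, producing the stated $-\alpha\sgn(x-c)/\Gamma(1-\alpha)$ coefficient on the residual integral of $\zeta_x(t)/|x-t|^{\alpha+1}$.

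The main obstacle is bookkeeping of signs and orientations: the integral $\int_c^x$ is oriented, $|x-t|$ is symmetric, and $\sgn(x-c)$ appears only after combining these, so one must be careful not to lose a sign when $x<c$ (where the limits are reversed). A secondary delicacy is the $\alpha=1$ endpoint, where $\Gamma(1-\alpha)$ diverges and the integrals are singular; I would handle this by stating the identity for $\alpha\in(0,1)$ and recovering $\alpha=1$ by the limit/convention under which both sides reduce to the ordinary expression $f'(x)-f'(x)=0$ (or equivalently, noting that $1/\Gamma(1-\alpha)\to 0$ makes the $\zeta$-integral contributions vanish, while the LHS is designed to absorb the singularity through the correction term).
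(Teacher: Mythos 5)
Your proposal is correct and follows essentially the same route as the paper's proof: both reduce to $n=1$, split off the $f'(x)$ part (your explicit evaluation of $\int_c^x |x-t|^{-\alpha}\,dt$ is exactly the paper's computation of $\CD{c}x$), and integrate the $\zeta_x$ remainder by parts with $\sgn(x-t)=\sgn(x-c)$ pulled outside, the only cosmetic difference being that the paper kills the boundary term at $t=x$ via L'H\^opital where you invoke $\zeta_x(t)=o(|t-x|)$, and the paper does not separately discuss the $\alpha=1$ endpoint that you flag.
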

\begin{proof}
    Proof is via integration by parts following the logic of Proposition 3.1 of \cite{https://doi.org/10.1002/mma.7127}, see Appendix~\ref{Relation_Proof} for details.
\end{proof}
\begin{corollary}
    Suppose $f:\mathbb{R}\to\mathbb{R}$ is continuously differentiable.  Let $\alpha \in (0,1]$.  If $f$ is convex,
    \begin{align*}
        \CD{c}f(x) \leq \frac{f'(x)(x-c)}{\Gamma(2-\alpha)|x-c|^\alpha}.
    \end{align*}
\end{corollary}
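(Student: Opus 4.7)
The plan is to apply Theorem~\ref{thm:relation} directly and argue that both terms on the right-hand side are non-positive whenever $f$ is convex. By the theorem, it suffices to show
\begin{align*}
\frac{-\zeta_x(c)}{\Gamma(1-\alpha)|x-c|^\alpha} - \frac{\alpha\sgn(x-c)}{\Gamma(1-\alpha)}\int_c^x \frac{\zeta_x(t)}{|x-t|^{\alpha+1}}\,dt \;\leq\; 0,
\end{align*}
because then rearranging the identity yields the claimed upper bound on $\CD{c}f(x)$.

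First I would invoke convexity in its first-order form: since $f$ is continuously differentiable and convex, the tangent line at $x$ lies below the graph, so $\zeta_x(t)=f(t)-f(x)-f'(x)(t-x)\geq 0$ for every $t\in\mathbb{R}$. In particular $\zeta_x(c)\geq 0$ and the integrand $\zeta_x(t)/|x-t|^{\alpha+1}$ is non-negative wherever it is defined. For $\alpha\in(0,1)$ we also have $\Gamma(1-\alpha)>0$, so the first term $-\zeta_x(c)/(\Gamma(1-\alpha)|x-c|^\alpha)$ is clearly $\leq 0$.

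Next I would handle the integral term by a sign analysis on the two cases $x>c$ and $x<c$. If $x>c$, then $\sgn(x-c)=1$ and $\int_c^x$ is an ordinary forward integral of a non-negative function, hence non-negative; the coefficient $-\alpha/\Gamma(1-\alpha)$ is negative, so the whole term is $\leq 0$. If $x<c$, then $\sgn(x-c)=-1$ and $\int_c^x=-\int_x^c$, so $\sgn(x-c)\int_c^x\frac{\zeta_x(t)}{|x-t|^{\alpha+1}}\,dt=\int_x^c\frac{\zeta_x(t)}{|x-t|^{\alpha+1}}\,dt\geq 0$, and again the term is $\leq 0$. Combining the two non-positive contributions gives the inequality.

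The only mildly delicate point, and what I would flag as the main obstacle, is the boundary case $\alpha=1$, where $1/\Gamma(1-\alpha)$ formally vanishes while the first derivative identity becomes an equality. I would dispose of it either by taking the limit $\alpha\to 1^-$ in the inequality already established on $(0,1)$ (using Theorem~\ref{thm:limit} to identify $\lim_{\alpha\to 1}\CD{c}f(x)=\sgn(x-c)f'(x)$ and checking that the right-hand side of the corollary reduces to the same expression), or by observing directly that at $\alpha=1$ both terms in the identity of Theorem~\ref{thm:relation} vanish because $1/\Gamma(1-\alpha)\to 0$ while $\zeta_x(c)$ and the integral of $\zeta_x(t)/|x-t|^2$ (interpreted as an improper integral that may diverge, but multiplied by a vanishing factor) stay controlled; the cleaner argument is the limiting one since the corollary is an inequality rather than an equality.
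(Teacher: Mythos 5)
Your proof is correct and follows essentially the same route as the paper: apply Theorem~\ref{thm:relation}, use convexity to get $\zeta_x(t)\geq 0$, and observe that $\sgn(x-c)$ orients the integral positively so both right-hand terms are non-positive. Your additional limiting treatment of the boundary case $\alpha=1$ is a legitimate refinement of a point the paper's proof glosses over, since for merely $C^1$ convex $f$ the integral $\int \zeta_x(t)/|x-t|^{2}\,dt$ need not converge and the limit argument via $\alpha\to 1^-$ is indeed the clean way to handle it.
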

\begin{proof}
    Start with Theorem~\ref{thm:relation}'s conclusion.  Convexity implies $\zeta_x(t)\geq 0$.  Thus, the first term on the RHS is immediately $\leq 0$.  In the second term on the RHS, $\sgn(x-c)$ fixes the integral to be in the positive direction.  Therefore, the second term is also $\leq 0$ since the interior of the integral is positive.  
\end{proof}
In the interest of getting the most general results possible, we need to extend the notion of $L$-smooth and $\mu$-strongly convex as will be defined here following \cite{Nesterov2015}.  As will be shown in later results, this extended notion could be more natural for fractional gradient descent.
\begin{definition}
    $f:\mathbb{R}^k\to\mathbb{R}$ is $(L, p)$-H\"older smooth for $p>0$ if:
    \begin{align*}
        |f(y) - f(x) - \dotp{\nabla f(x)}{y-x}| \leq \frac{L}{1+p}\norm{y-x}_{1+p}^{1+p}.
    \end{align*}
    If $p=1$, $f$ is $L$-smooth.
\end{definition}
Note that when convexity is assumed, the absolute value signs on the LHS do not matter since convexity means the LHS is non-negative.
\begin{definition}
    $f:\mathbb{R}^k\to\mathbb{R}$ is $(\mu, p)$-uniformly convex for $p>0$ if:
    \begin{align*}
        f(y)-f(x)-\dotp{\nabla f(x)}{y-x} \geq \frac{\mu}{1+p}\norm{y-x}_{1+p}^{1+p}.
    \end{align*}
    If $p=1$, $f$ is $\mu$-strongly convex.
\end{definition}
Note that here the norm changes with $p$ as well.  Although this may be somewhat non-standard, the linearity over dimension is useful in proving later results.

In later sections, both $p=1$ and $p\neq1$ cases will be studied so for this section we derive bounds in the most general $p\neq1$ case.  For $p=1$, \cite{zhou2018fenchel} provides many useful properties that will be leveraged in proving convergence rates later on.  These smoothness and convexity definitions are now combined with Theorem~\ref{thm:relation} to get two more useful inequalities.
\begin{corollary}
    Suppose $f:\mathbb{R}\to\mathbb{R}$ is continuously differentiable.  Let $\alpha \in (0,1]$.  If $f$ is $(L, p)$-H\"older smooth,
    \begin{align*}
        \left|\frac{f'(x)(x-c)}{\Gamma(2-\alpha)|x-c|^\alpha}-\CD{c}f(x)\right|\leq \frac{L}{\Gamma(1-\alpha)(1+p-\alpha)}|x-c|^{1+p-\alpha}.
    \end{align*}
    \label{cor:smooth_bound}
\end{corollary}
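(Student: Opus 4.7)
The plan is to start from Theorem~\ref{thm:relation}, which already gives an exact expression for the quantity $\frac{f'(x)(x-c)}{\Gamma(2-\alpha)|x-c|^\alpha}-\CD{c}f(x)$ as a sum of two terms involving $\zeta_x$. Since Hölder smoothness in one dimension reduces to the pointwise inequality $|\zeta_x(t)|\le \frac{L}{1+p}|t-x|^{1+p}$, the corollary should follow from the triangle inequality combined with two elementary estimates.

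First I would apply the triangle inequality to split the task into bounding
\begin{align*}
A := \left|\frac{\zeta_x(c)}{\Gamma(1-\alpha)|x-c|^\alpha}\right|, \qquad B := \left|\frac{\alpha\sgn(x-c)}{\Gamma(1-\alpha)}\int_c^x \frac{\zeta_x(t)}{|x-t|^{\alpha+1}}dt\right|.
\end{align*}
For $A$, plugging the Hölder bound at $t=c$ directly gives $A\le \frac{L|x-c|^{1+p-\alpha}}{(1+p)\Gamma(1-\alpha)}$. For $B$, the factor $\sgn(x-c)$ is precisely what makes $\sgn(x-c)\int_c^x(\cdot)\,dt = \int_{\min(x,c)}^{\max(x,c)}(\cdot)\,dt$, so after pulling the absolute value inside the integral and applying the Hölder bound I would be left with evaluating $\int_{\min(x,c)}^{\max(x,c)} |x-t|^{p-\alpha}\,dt = \frac{|x-c|^{1+p-\alpha}}{1+p-\alpha}$. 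This yields $B\le \frac{\alpha L |x-c|^{1+p-\alpha}}{(1+p)(1+p-\alpha)\Gamma(1-\alpha)}$.

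Adding the two bounds and simplifying $1+\frac{\alpha}{1+p-\alpha} = \frac{1+p}{1+p-\alpha}$ collapses the prefactor $(1+p)$ and gives exactly $\frac{L|x-c|^{1+p-\alpha}}{\Gamma(1-\alpha)(1+p-\alpha)}$, matching the stated inequality.

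The main obstacle is not really analytical but bookkeeping: one must be careful that the extension to $x<c$ (where the Caputo derivative carries signs and the integral $\int_c^x$ runs in the negative direction) is handled correctly so that the $\sgn(x-c)$ in the second term cancels against the orientation of $\int_c^x$, and that the convergence of $\int|x-t|^{p-\alpha}\,dt$ at $t=x$ is legitimate --- which holds provided $p-\alpha>-1$, i.e., $1+p-\alpha>0$, a condition implicit in the statement (otherwise the right-hand side would itself blow up). Everything else is direct substitution and algebraic simplification.
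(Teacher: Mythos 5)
Your proposal is correct and matches the paper's own proof essentially step for step: both apply Theorem~\ref{thm:relation}, insert the pointwise H\"older bound $|\zeta_x(t)|\le \frac{L}{1+p}|x-t|^{1+p}$ into the two terms, use the $\sgn(x-c)$ factor to orient the integral and evaluate $\int_c^x |x-t|^{p-\alpha}\,dt = \sgn(x-c)\frac{|x-c|^{1+p-\alpha}}{1+p-\alpha}$, and simplify via $1+\frac{\alpha}{1+p-\alpha}=\frac{1+p}{1+p-\alpha}$. The only cosmetic difference is that you use the triangle inequality on the absolute value while the paper proves the two one-sided bounds separately, which is the same argument.
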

\begin{proof}
    Proof is a straightforward application of Theorem~\ref{thm:relation} and the definition of $(L, p)$-H\"older smooth, see Appendix~\ref{Smooth_Bound_Proof} for details.
\end{proof}
\begin{corollary}
    Suppose $f:\mathbb{R}\to\mathbb{R}$ is continuously differentiable.  Let $\alpha \in (0,1]$.  If $f$ is $(\mu, p)$-uniformly convex,
    \begin{align*}
        \frac{f'(x)(x-c)}{\Gamma(2-\alpha)|x-c|^\alpha}-\CD{c}f(x)\geq \frac{\mu}{\Gamma(1-\alpha)(1+p-\alpha)}|x-c|^{1+p-\alpha}.
    \end{align*}
    \label{cor:strong_convex_bound}
\end{corollary}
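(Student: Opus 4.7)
The plan is to mirror the proof of Corollary~\ref{cor:smooth_bound}, replacing the absolute-value upper bound on $\zeta_x$ coming from H\"older smoothness with the one-sided lower bound coming from $(\mu,p)$-uniform convexity. Specifically, I start from the identity in Theorem~\ref{thm:relation}, move $\CD{c}f(x)$ to the right and the rest to the left so the quantity I need to bound below appears as
\begin{align*}
\frac{f'(x)(x-c)}{\Gamma(2-\alpha)|x-c|^\alpha}-\CD{c}f(x) \;=\; \frac{\zeta_x(c)}{\Gamma(1-\alpha)|x-c|^\alpha} \;+\; \frac{\alpha\sgn(x-c)}{\Gamma(1-\alpha)}\int_c^x \frac{\zeta_x(t)}{|x-t|^{\alpha+1}}\,dt.
\end{align*}

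Next, since $f$ is single-variable, the $(1+p)$-norm in the uniform convexity definition collapses to an absolute value, giving $\zeta_x(t) \geq \frac{\mu}{1+p}|t-x|^{1+p}$ for every $t$. Plugging this into the first term on the right immediately produces $\frac{\mu}{(1+p)\Gamma(1-\alpha)}|x-c|^{1+p-\alpha}$. For the integral term, the key point (as in the smooth case) is that $\sgn(x-c)$ exactly compensates for the orientation of $\int_c^x$, so that $\sgn(x-c)\int_c^x g(t)\,dt = \int_{\min(x,c)}^{\max(x,c)} g(t)\,dt$ whenever $g\ge 0$; applied with $g(t) = \zeta_x(t)/|x-t|^{\alpha+1} \geq \frac{\mu}{1+p}|t-x|^{p-\alpha}$, this preserves the inequality direction and yields $\frac{\alpha\mu}{(1+p)\Gamma(1-\alpha)}\cdot \frac{|x-c|^{1+p-\alpha}}{1+p-\alpha}$.

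Finally, adding the two contributions gives
\begin{align*}
\frac{\mu|x-c|^{1+p-\alpha}}{\Gamma(1-\alpha)(1+p)(1+p-\alpha)}\bigl[(1+p-\alpha)+\alpha\bigr] \;=\; \frac{\mu}{\Gamma(1-\alpha)(1+p-\alpha)}|x-c|^{1+p-\alpha},
\end{align*}
which is exactly the claimed bound. There is no real obstacle here beyond bookkeeping: the only subtle step is the sign/orientation argument in the integral, and that is already used in the earlier corollaries, so it can be invoked verbatim. The rest is the same algebraic simplification that appears in Corollary~\ref{cor:smooth_bound}.
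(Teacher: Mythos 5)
Your proposal is correct and follows essentially the same route as the paper: the paper proves this corollary by declaring it identical to the proof of Corollary~\ref{cor:smooth_bound} with $L$ replaced by $\mu$ and the inequalities reversed, which is precisely the argument you spell out, including the orientation argument $\sgn(x-c)\int_c^x = \int_{\min(x,c)}^{\max(x,c)}$ and the final simplification $(1+p-\alpha)+\alpha = 1+p$. No gaps.
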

\begin{proof}
    The proof is identical to that of Corollary~\ref{cor:smooth_bound} simply replacing $L$ with $\mu$ and using $\geq$ instead of $\leq$.
\end{proof}
We now will make use of these bounds to derive convergence rates for several different settings.
\section{Smooth and Strongly Convex Optimization}\label{Smooth-Strongly-Convex-Optimization}
\newcommand{\Cd}[2][{{\alpha,\beta}}]{{}^C\delta^#1_#2}
\subsection{Fractional Gradient Descent Method}\label{descent_method}
This section will focus on a modification of the AT-CFGD method from \cite{shin2021caputo} from the perspective of smooth and strongly convex twice differentiable functions.  This study is a natural extension of prior work since they focused primarily on quadratic functions.  For this section, we also assume that $p=1$ since $p\neq 1$ introduces many complications stemming from the non-existence of inner products corresponding to $L^{1+p}$ norms if $p\neq 1$.  The fractional gradient descent method is defined for $f:\mathbb{R}\to\mathbb{R}$ with $\alpha \in (0,1)$, $\beta \in \mathbb{R}$ as:
\begin{align*}
    x_{t+1} = x_{t} - \eta_t \Cd{{c_t}}f(x_t)
\end{align*}
where
\begin{align*}
   \Cd{{c}}f(x) &=\frac{1}{\CD{c}x}(\CD{c}f(x)+\beta|x-c|\CD[{{1+\alpha}}]{c}f(x))\\
   &=\frac{\CD{c}f(x)\Gamma(2-\alpha)}{x-c}|x-c|^\alpha+\beta|x-c|\frac{\CD[{{1+\alpha}}]{c}f(x)\Gamma(2-\alpha)}{x-c}|x-c|^\alpha.
\end{align*}
The intuition for this method is given by Theorem 2.3 in \cite{shin2021caputo}.  It states that given a Taylor expansion of $f$ around $c$, $\Cd{{c}}f(x)$ is the derivative of a smoothed function where the $k$th term for $k\geq2$ is scaled by $C_{k,\alpha,\beta}=\left(\frac{\Gamma(2-\alpha)\Gamma(k)}{\Gamma(k+1-\alpha)}+\beta\frac{\Gamma(2-\alpha)\Gamma(k)}{\Gamma(k-\alpha)}\right)$.  The sign of $\beta$ therefore determines the asymptotic behavior of these coefficients with respect to $k$ since the first term goes to $0$ as $k\to \infty$ and the second has asymptotic rate $\beta(k-\alpha)^\alpha$ due to Wendel's double inequality \citep{Qi_2013}.

For this method to be complete, we need to define how to choose $c_t$.  We will see that a convenient choice is $x_t-c_t = -\lambda_t \nabla f(x_t)$ for well chosen $\lambda_t$.  In later experiments, this method will thus be labelled as Fractional Descent guided by Gradient.  This choice of $c_t$ differs from \cite{shin2021caputo} whose AT-CFGD method used $c_t = x_{t-m}$ for some positive integer $m$.  In practice this fractional gradient descent method can be computed with Gauss-Jacobi quadrature as described in detail in \cite{shin2021caputo}.

In order to better understand this fractional gradient descent method, we can take the limit as $\alpha\to1$.  Calculating, we arrive at $\Cd{{c_t}}f(x_t) = f'(x_t) -\lambda_t\beta f'(x_t) f''(x_t) =  (1-\lambda_t\beta f''(x_t))f'(x_t)$.  Intuitively, if $\lambda_t\beta$ is chosen properly, this approximates $f''(x_t)^{-1}f'(x_t)$ up to proportionality which corresponds to a 2nd order gradient method update.  We will see in Section~\ref{advantage} that we can actually accomplish similar behavior on quadratic functions even with $\beta=0$ as long as $\alpha\in (0,1)$.
\subsection{Single Dimensional Results}
Before we can prove convergence results, we need one more inequality for bounding the $1+\alpha$ derivative.
\begin{lemma}
    Suppose $f:\mathbb{R}\to\mathbb{R}$ is twice differentiable.  If $f$ is $L$-smooth and $\alpha \in (1,2]$.  Then,
    \begin{align*}
        \CD{c}f(x)\leq \frac{L}{\Gamma(3-\alpha)}|x-c|^{2-\alpha}.
    \end{align*}
    If $f$ is $\mu$-strongly convex and $\alpha \in (1,2]$.  Then,
    \begin{align*}
        \CD{c}f(x)\geq \frac{\mu}{\Gamma(3-\alpha)}|x-c|^{2-\alpha}.
    \end{align*}
    \label{lem:2nd_bounds}
\end{lemma}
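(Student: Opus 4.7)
The plan is a direct application of the unified Caputo derivative definition with $n = \lceil \alpha \rceil = 2$. The key fact is that in one dimension, $L$-smoothness of a twice-differentiable $f$ is equivalent to $|f''(t)| \leq L$ (and in particular $f''(t) \leq L$), while $\mu$-strong convexity is equivalent to $f''(t) \geq \mu$. These pointwise bounds on $f''$ can be pulled directly inside the integral defining the Caputo derivative.

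Concretely, I would write
\begin{align*}
\CD{c}f(x) = \frac{\sgn(x-c)}{\Gamma(2-\alpha)} \int_c^x \frac{f''(t)}{|x-t|^{\alpha-1}}\,dt
\end{align*}
and then split into cases by the sign of $x-c$. For $x > c$, the sign factor is $+1$ and the integrand is positive, so substituting $f''(t) \leq L$ and computing the elementary power integral gives
\begin{align*}
\CD{c}f(x) \leq \frac{L}{\Gamma(2-\alpha)} \int_c^x (x-t)^{1-\alpha}\,dt = \frac{L(x-c)^{2-\alpha}}{(2-\alpha)\Gamma(2-\alpha)}.
\end{align*}
For $x < c$, the factor $\sgn(x-c) = -1$ cancels against the reversed orientation $\int_c^x = -\int_x^c$, yielding the same expression with $(c-x)^{2-\alpha}$ in place of $(x-c)^{2-\alpha}$. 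Applying the identity $\Gamma(3-\alpha) = (2-\alpha)\Gamma(2-\alpha)$ and consolidating both sign cases via $|x-c|^{2-\alpha}$ then gives the claimed smoothness bound uniformly. The strong convexity bound is obtained by the identical computation, substituting $f''(t) \geq \mu$ and reversing the inequality throughout.

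The only point needing separate treatment is the endpoint $\alpha = 2$, where $\Gamma(2-\alpha)$ has a pole and the integral representation breaks down. Here I would appeal to Theorem~\ref{thm:limit}, which identifies $\lim_{\alpha \to 2} \CD{c}f(x)$ with $\sgn(x-c)^2 f''(x) = f''(x)$, while the right-hand side of the claimed bound collapses to $L$ (respectively $\mu$) since $\Gamma(1) = 1$ and $|x-c|^0 = 1$; the inequality reduces to the defining pointwise characterizations of smoothness and strong convexity. There is no substantive obstacle in this proof; the only delicacy is the bookkeeping between $\sgn(x-c)$ and the orientation of the integral, ensuring that $|x-c|^{2-\alpha}$ emerges cleanly on the right in both sign regimes.
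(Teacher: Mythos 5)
Your proof is correct and takes essentially the same route as the paper: substitute the pointwise bounds $f''(t)\leq L$ (resp.\ $f''(t)\geq\mu$) into the Caputo integral with $n=2$, evaluate the elementary power integral, and use $\Gamma(3-\alpha)=(2-\alpha)\Gamma(2-\alpha)$, with the orientation issue resolved by the $\sgn(x-c)$ factor (which the paper handles in one line where you split into explicit sign cases). Your separate treatment of the endpoint $\alpha=2$ via Theorem~\ref{thm:limit}, where $\Gamma(2-\alpha)$ has a pole and the bound degenerates to the pointwise characterizations $f''\leq L$ and $f''\geq\mu$, is a small point of extra care that the paper's proof silently glosses over.
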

\begin{proof}
    This is a direct result of the fact that $L$-smooth implies that $f''(x)\leq L$ and $\mu$-strongly convex implies that $f''(x)\geq \mu$.  See Appendix~\ref{Proof_2nd_bounds} for details.
\end{proof}
The next theorems are the primary tool that will be used for convergence results of this method.
\begin{theorem}
    Suppose $f:\mathbb{R}\to\mathbb{R}$ is twice differentiable.  If $f$ is $L$-smooth and $\mu$-strongly convex, $\alpha \in (0,1]$, $\beta\geq 0$.  Then,
    \begin{align*}
        |\Cd{{c}}f(x)-f'(x)-K_1(x-c)|\leq K_2|x-c|.
    \end{align*}
    where $K_1 = (\frac{L+\mu}{2})(\beta-\gamma)$ and $K_2 = (\frac{L-\mu}{2})(\beta+\gamma)$ with $\gamma = \frac{1-\alpha}{2-\alpha}$.  Note that the above also holds if $f$ is $L$-smooth and convex if $\mu$ is set to 0.
    \label{thm:pos_bound}
\end{theorem}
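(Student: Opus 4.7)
My strategy is to unfold the definition of $\Cd{c}f(x)$, plug in the sharp two-sided bounds on $\CD{c}f$ and $\CD[{{1+\alpha}}]{c}f$ established in Section~\ref{Relation}, then recognize the midpoint $f'(x)+K_1(x-c)$ so that the remainder can be controlled by a one-line triangle-inequality estimate.

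First, I introduce $\sigma=\sgn(x-c)$ and $r=|x-c|$. Using $|x-c|^\alpha/(x-c) = \sigma r^{\alpha-1}$, the given expression collapses to
\[\Cd{c}f(x) = \sigma\,\Gamma(2-\alpha)\,r^{\alpha-1}\,\CD{c}f(x) \;+\; \beta\,\sigma\,\Gamma(2-\alpha)\,r^{\alpha}\,\CD[{{1+\alpha}}]{c}f(x).\]
For the order-$\alpha$ piece, Corollaries~\ref{cor:smooth_bound} and~\ref{cor:strong_convex_bound} with $p=1$ say that $\CD{c}f(x) = \frac{f'(x)(x-c)}{\Gamma(2-\alpha)|x-c|^\alpha} - A$ for some $A$ in $\bigl[\frac{\mu r^{2-\alpha}}{\Gamma(1-\alpha)(2-\alpha)},\,\frac{L r^{2-\alpha}}{\Gamma(1-\alpha)(2-\alpha)}\bigr]$. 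For the order-$(1+\alpha)$ piece, Lemma~\ref{lem:2nd_bounds} (valid since $1+\alpha\in(1,2]$) puts $\CD[{{1+\alpha}}]{c}f(x)$ in $\bigl[\frac{\mu r^{1-\alpha}}{\Gamma(2-\alpha)},\,\frac{L r^{1-\alpha}}{\Gamma(2-\alpha)}\bigr]$.

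Substituting both and simplifying via $\Gamma(2-\alpha)=(1-\alpha)\Gamma(1-\alpha)$, the first piece contributes $f'(x)-\sigma a$ for some $a\in[\mu\gamma r,\,L\gamma r]$, and the second contributes $\beta\sigma b$ for some $b\in[\mu r,\,L r]$. Thus
\[\Cd{c}f(x)-f'(x) = \sigma(\beta b - a).\]
Centering $a$ and $b$ on their midpoints, write $a=\frac{(L+\mu)\gamma r}{2}+\tilde a$ with $|\tilde a|\leq\frac{(L-\mu)\gamma r}{2}$ and $b=\frac{(L+\mu) r}{2}+\tilde b$ with $|\tilde b|\leq\frac{(L-\mu) r}{2}$. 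The midpoint portion of $\sigma(\beta b - a)$ equals $\sigma\cdot\frac{L+\mu}{2}(\beta-\gamma)r = K_1(x-c)$ (using $\sigma r = x-c$), and the triangle inequality, together with $\beta\geq 0$ so that $|\beta\tilde b|=\beta|\tilde b|$, bounds the deviation $|\sigma(\beta\tilde b-\tilde a)|$ by $\beta\cdot\frac{(L-\mu)r}{2}+\frac{(L-\mu)\gamma r}{2}=K_2 r$, which is the desired estimate.

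The calculation itself is elementary; the main obstacle is keeping the bookkeeping straight. Two distinct sign factors $\sgn(x-c)$ are in play (one from the Caputo definition and one that emerges from $|x-c|^\alpha/(x-c)$), and the gamma-function identity $\Gamma(2-\alpha)/\Gamma(1-\alpha)=1-\alpha$ (followed by dividing by $2-\alpha$) is exactly what produces the compact coefficient $\gamma=(1-\alpha)/(2-\alpha)$ in both $K_1$ and $K_2$. Once these reductions are executed carefully, recognizing that the two intervals for $a$ and $b$ are both symmetric of widths proportional to $L-\mu$ around midpoints proportional to $L+\mu$ makes the final triangle-inequality step immediate, and the hypothesis $\beta\geq 0$ is used in exactly one place.
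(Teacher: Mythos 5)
Your proof is correct and follows essentially the same route as the paper's: both feed Corollaries~\ref{cor:smooth_bound} and~\ref{cor:strong_convex_bound} (for the order-$\alpha$ term) and Lemma~\ref{lem:2nd_bounds} (for the order-$(1+\alpha)$ term) into the definition of the fractional descent operator, and then extract the midpoint $K_1(x-c)$ and half-width $K_2|x-c|$ of the resulting two-sided bounds, using $\beta\geq 0$ exactly once to keep the coefficient signs. The only difference is bookkeeping style: you carry a single sign factor $\sigma=\sgn(x-c)$ through an interval (midpoint-plus-deviation) computation, whereas the paper splits the sign cases via ReLU terms and then averages the coefficients to identify $K_1$ and $K_2$.
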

\begin{proof}
    Holds by applying Corollary~\ref{cor:smooth_bound}, Corollary~\ref{cor:strong_convex_bound}, and Lemma~\ref{lem:2nd_bounds}.  See Appendix~\ref{Pos_Proof} for details.
\end{proof}
\begin{theorem}
    Suppose $f:\mathbb{R}\to\mathbb{R}$ is twice differentiable.  If $f$ is $L$-smooth and $\mu$-strongly convex, $\alpha \in (0,1]$, $\beta\leq0$.  Then,
    \begin{align*}
        |\Cd{{c}}f(x)-f'(x)-K_1(x-c)|\leq K_2|x-c|.
    \end{align*}
    where $K_1 = (\frac{L+\mu}{2})(\gab)$ and $K_2 = (\frac{\mu-L}{2})(\gab)$ with $\gab = \beta - \frac{1-\alpha}{2-\alpha}$.  Note that the above also holds if $f$ is $L$-smooth and convex if $\mu$ is set to 0.
    \label{thm:neg_bound}
\end{theorem}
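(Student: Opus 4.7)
The plan is to follow essentially the same template as the proof of Theorem~\ref{thm:pos_bound}, with the sign change on $\beta$ only affecting how an interval is propagated through multiplication by $\beta$. Concretely, start by expanding the two-term definition of $\Cd{c}f(x)$:
\begin{align*}
\Cd{c}f(x) = \underbrace{\frac{\CD{c}f(x)\,\Gamma(2-\alpha)}{x-c}|x-c|^{\alpha}}_{\text{first term}} \;+\; \beta \underbrace{|x-c|\frac{\CD[1+\alpha]{c}f(x)\,\Gamma(2-\alpha)}{x-c}|x-c|^{\alpha}}_{\text{second term}}.
\end{align*}
The first factor in the first term can be handled via Corollary~\ref{cor:smooth_bound} and Corollary~\ref{cor:strong_convex_bound} (applied with $p=1$, $\alpha\in(0,1]$), which sandwich $\tfrac{f'(x)(x-c)}{\Gamma(2-\alpha)|x-c|^{\alpha}}-\CD{c}f(x)$ between $\tfrac{\mu}{\Gamma(1-\alpha)(2-\alpha)}|x-c|^{2-\alpha}$ and $\tfrac{L}{\Gamma(1-\alpha)(2-\alpha)}|x-c|^{2-\alpha}$. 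Multiplying through by $\tfrac{\Gamma(2-\alpha)|x-c|^{\alpha}}{x-c}$ and using $\Gamma(2-\alpha)/\Gamma(1-\alpha)=1-\alpha$ gives that the first term equals $f'(x)-b(x-c)$ for some $b\in[\mu\gamma,L\gamma]$ with $\gamma=\tfrac{1-\alpha}{2-\alpha}$. Then I would apply Lemma~\ref{lem:2nd_bounds} with order $1+\alpha\in(1,2]$ to bound $\CD[1+\alpha]{c}f(x)$ between $\tfrac{\mu}{\Gamma(2-\alpha)}|x-c|^{1-\alpha}$ and $\tfrac{L}{\Gamma(2-\alpha)}|x-c|^{1-\alpha}$; combined with $|x-c|^{\alpha+1}/(x-c)=\sgn(x-c)|x-c|^{\alpha}$ and $\sgn(x-c)|x-c|=x-c$, the second term (without the $\beta$ prefactor) becomes $d(x-c)$ for some $d\in[\mu,L]$.

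Collecting the pieces yields the clean identity
\begin{align*}
\Cd{c}f(x)-f'(x) = (\beta d - b)(x-c), \qquad b\in[\mu\gamma,L\gamma],\ d\in[\mu,L].
\end{align*}
So what remains is purely an interval-arithmetic calculation: compute the midpoint $K_1$ and half-width $K_2$ of the range of $\beta d - b$. This is exactly where the sign of $\beta$ enters, and it is the only real difference from Theorem~\ref{thm:pos_bound}. Because $\beta\leq 0$, multiplication by $\beta$ reverses the interval for $d$: $\beta d$ ranges over $[\beta L,\beta\mu]$, whereas $-b$ ranges over $[-L\gamma,-\mu\gamma]$. Adding gives $\beta d - b\in[L(\beta-\gamma),\,\mu(\beta-\gamma)]$. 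The midpoint is $\tfrac{L+\mu}{2}(\beta-\gamma)=\tfrac{L+\mu}{2}\gab=K_1$, and the half-width is $\tfrac{\mu-L}{2}(\beta-\gamma)=\tfrac{\mu-L}{2}\gab=K_2$; note $K_2\geq 0$ because $\mu-L\leq 0$ and $\gab=\beta-\gamma\leq 0$. This establishes $|\Cd{c}f(x)-f'(x)-K_1(x-c)|\leq K_2|x-c|$. The convex, non-strongly-convex case follows by setting $\mu=0$, which forces the left endpoint of each interval to $0$ and only shrinks the range, so the same bound still holds.

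The main obstacle I would expect is bookkeeping around the sign flip at the interval-arithmetic step: if $\beta\geq 0$ one gets $\beta d - b\in[\beta\mu-L\gamma,\beta L-\mu\gamma]$, whereas for $\beta\leq 0$ one must resist the reflex of pairing minimum of $d$ with minimum of $b$, and instead pair minimum of $\beta d$ with maximum of $b$ (and vice versa). Everything else, including the reduction of gamma-function ratios and the identification of $|x-c|^{\alpha+1}/(x-c)$ with $(x-c)|x-c|^{\alpha-1}$, is mechanical and essentially identical to the $\beta\geq 0$ case, so I would invoke Appendix~\ref{Pos_Proof} by analogy and only spell out the interval-arithmetic step in detail.
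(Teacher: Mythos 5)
Your proposal is correct and in substance identical to the paper's proof: the same decomposition of the operator into its two terms, the same invocation of Corollary~\ref{cor:smooth_bound}, Corollary~\ref{cor:strong_convex_bound} (for the first term) and Lemma~\ref{lem:2nd_bounds} at order $1+\alpha$ (for the second), and the same midpoint/half-width algebra producing $K_1=\frac{L+\mu}{2}(\beta-\gamma)$ and $K_2=\frac{\mu-L}{2}(\beta-\gamma)$. The only difference is cosmetic bookkeeping: you absorb the sign of $x-c$ via the identity $|x-c|^2/(x-c)=x-c$ and interval arithmetic (correctly reversing the interval for $\beta d$ since $\beta\leq 0$), whereas the paper runs the equivalent case analysis explicitly through ReLU terms $R(x-c)$ and $R(c-x)$.
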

\begin{proof}
    Holds by applying Corollary~\ref{cor:smooth_bound}, Corollary~\ref{cor:strong_convex_bound}, and Lemma~\ref{lem:2nd_bounds}.  See Appendix~\ref{Neg_Proof} for details.
\end{proof}
Everything is now ready for beginning discussion of convergence results.  We have two cases: $\beta\geq0$ and $\beta\leq0$.  We can treat these cases simultaneously by defining $K_1$, $K_2$ as in Theorem~\ref{thm:pos_bound} for the former and Theorem~\ref{thm:neg_bound} for the latter.  In both these cases, $K_2\geq0$, however, $K_1$ can be positive or negative.  For single dimensional $f$, we get the following convergence analysis theorem.
\begin{theorem}
    Suppose $f:\mathbb{R}\to\mathbb{R}$ is twice differentiable, $L$-smooth, and $\mu$-strongly convex.  Set $0<\alpha<1$ and $\beta \in \mathbb{R}$.  If $\beta\geq0$, define $K_1$, $K_2$ as in Theorem~\ref{thm:pos_bound}; if $\beta\leq0$, define $K_1$, $K_2$ as in Theorem~\ref{thm:neg_bound}.  Let the fractional gradient descent method be defined as follows. 
    \begin{itemize}
        \item $x_{t+1} = x_{t} - \eta_t \Cd{{c_t}}f(x_t)$
        \item $\eta_t = \frac{(1-K_1\lambda_t-K_2|\lambda_t|)\phi}{(1-K_1\lambda_t+K_2|\lambda_t|)^2L}$ for $0<\phi<2$
        \item $x_t-c_t = -\lambda_t f'(x_t)$ with $1-K_1\lambda_t-K_2|\lambda_t| > \epsilon > 0$
    \end{itemize}
    Then, this method achieves the following linear convergence rate:
    \begin{align*}
        |x_{t+1}-x^*|^2&\leq \left[1-(2-\phi)\phi\frac{\mu}{L}\left(\frac{1-K_1\lambda_t-K_2|\lambda_t|}{1-K_1\lambda_t+K_2|\lambda_t|}\right)^2\right]|x_{t}-x^*|^2.
    \end{align*}
    In particular, however, this rate is at best the same as gradient descent.
    \label{thm:beta_unified_low}
\end{theorem}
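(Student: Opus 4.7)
The plan is to reduce the fractional update to an equivalent scaled gradient step and then invoke the standard one-step analysis for $L$-smooth, $\mu$-strongly convex gradient descent. First, I would substitute $x_t - c_t = -\lambda_t f'(x_t)$ into the bound supplied by Theorem~\ref{thm:pos_bound} (or Theorem~\ref{thm:neg_bound} when $\beta \leq 0$) to deduce that
\[
\left|\Cd{c_t}f(x_t) - (1 - K_1\lambda_t) f'(x_t)\right| \leq K_2|\lambda_t|\,|f'(x_t)|,
\]
so that $\Cd{c_t}f(x_t) = \rho_t f'(x_t)$ for some scalar $\rho_t \in [a, b]$, where $a := 1 - K_1\lambda_t - K_2|\lambda_t|$ and $b := 1 - K_1\lambda_t + K_2|\lambda_t|$. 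By hypothesis $a > \epsilon > 0$, so $b \geq a > 0$; the case $f'(x_t)=0$ forces $c_t=x_t$ and, by strong convexity, $x_t=x^*$, trivializing the step.

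Next, I would treat the update as ordinary gradient descent with \emph{effective} step size $\nu_t := \eta_t \rho_t$, so that $x_{t+1} = x_t - \nu_t f'(x_t)$. Expanding $|x_{t+1}-x^*|^2$ and combining $\mu$-strong convexity ($f'(x_t)(x_t-x^*) \geq \mu|x_t-x^*|^2$) with the smoothness co-coercivity $f'(x_t)^2 \leq L f'(x_t)(x_t-x^*)$ yields the one-step bound
\[
|x_{t+1}-x^*|^2 \leq \bigl(1 - \nu_t(2-\nu_t L)\mu\bigr)\,|x_t-x^*|^2
\]
whenever $\nu_t \in (0, 2/L]$. With the prescribed $\eta_t = a\phi/(b^2 L)$ and $\rho_t \in [a,b]$, one computes $\nu_t L \in [\phi R^2,\,\phi R]$, where $R := a/b \in (0,1]$; since $\phi < 2$ this interval lies in $(0,2)$, so the one-step bound is applicable.

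Finally, to produce the claimed rate I must lower-bound $\nu_t(2-\nu_t L) \geq (2-\phi)\phi R^2/L$ uniformly over the admissible values of $\rho_t$, i.e.\ show $s(2-s) \geq (2-\phi)\phi R^2$ for every $s \in [\phi R^2, \phi R]$. Since $s\mapsto s(2-s)$ is concave on $[0,2]$, its infimum over the interval is attained at an endpoint; at $s=\phi R^2$ the inequality reduces to $R^2 \leq 1$, and at $s=\phi R$ it reduces (after dividing by $\phi R$) to $R \leq 1$, both immediate. The main obstacle is exactly this last step: the factor $b^2$ in the denominator of $\eta_t$ is tailored so that the \emph{worst} $\rho_t$ in the admissible interval still produces a valid contraction, and the concavity argument makes this robustness precise. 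The inequality $R \leq 1$ also transparently explains the closing remark in the statement, since the contraction factor $(2-\phi)\phi R^2 \mu/L$ never exceeds the corresponding gradient-descent rate $(2-\phi)\phi\mu/L$.
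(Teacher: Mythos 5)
Your proposal is correct, and it reaches the stated bound by a genuinely different route than the paper. The paper's proof (Appendix~\ref{Low_Unified_Proof}) expands the three-point identity for $(x_{t+1}-x^*)^2$, bounds the squared term and the cross term \emph{separately} using Theorem~\ref{thm:pos_bound}/Theorem~\ref{thm:neg_bound} (dropping absolute values in the cross term via convexity), splits $-2f'(x_t)(x_t-x^*)\leq -\tfrac{\phi}{L}f'(x_t)^2-(2-\phi)\mu(x_t-x^*)^2$, and then chooses $\eta_t$ so the quadratic term is dominated. You instead exploit the one-dimensional setting to write the fractional step as an exact scalar multiple $\rho_t f'(x_t)$ with $\rho_t\in[a,b]$ (valid since $|\rho_t-(1-K_1\lambda_t)|\leq K_2|\lambda_t|$ after dividing by $f'(x_t)\neq0$, with the degenerate case correctly dispatched), fold everything into an effective step size $\nu_t=\eta_t\rho_t$, invoke the standard contraction $1-\nu_t(2-\nu_t L)\mu$ for $\nu_t\in(0,2/L]$ (co-coercivity applies since strong convexity implies convexity), and absorb the ambiguity in $\rho_t$ by minimizing the concave map $s\mapsto s(2-s)$ over $s=\nu_t L\in[\phi R^2,\phi R]$ at its endpoints; both endpoint checks reduce to $R\leq1$ and are right. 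Your route is cleaner for this theorem and makes the closing remark transparent: the contraction $(2-\phi)\phi R^2\mu/L$ can never beat the gradient-descent factor precisely because $R\leq1$. What it buys less of is generality: the scalar-ratio reduction is intrinsically one-dimensional (it extends coordinate-wise to the separable Corollary~\ref{cor:beta_unified_low}, where each coordinate carries its own $\rho_t^{(i)}$), but in $\mathbb{R}^k$ the fractional operator is not a scalar multiple of $\nabla f(x_t)$, which is exactly why Theorem~\ref{thm:beta_unified} requires the paper's term-by-term treatment with a Cauchy--Schwarz step on the $\dotp{|\nabla f(x_t)|}{|x_t-x^*|}$ term; the paper's seemingly clunkier decomposition is the one that survives that generalization.
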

\begin{proof}
    Follows by applying Theorem~\ref{thm:pos_bound} and Theorem~\ref{thm:neg_bound} to bound the fractional gradient descent operator with an approximation in terms of the first derivative.  Then, the proof of standard gradient descent rate for smooth and strongly convex functions can be followed with additional error terms from the approximation depending on $x_t-c_t$.  This rate is at best same as gradient descent where we would expect to see a coefficient of $1-\frac{\mu}{L}$ since $K_2|\lambda_t|\geq0$.  See Appendix~\ref{Low_Unified_Proof} for details.
\end{proof}
As a remark on the condition $1-K_1\lambda_t-K_2|\lambda_t| > 0$, consider the special case $\lambda_t\geq0$, $K_1\geq0$ then this condition reduces to $\lambda_t<\frac{1}{K_1+K_2}$.  Similarly, for $\lambda_t\leq0$, $K_1\leq0$, this condition reduces to $\lambda_t>\frac{-1}{K_2-K_1}$.

Ultimately, this proof does not give a better rate than gradient descent.  In some sense, this is a limitation of the assumptions in that everything is expressed in terms of integer derivatives making it necessary to connect them with fractional derivatives.  This in turn makes the bound weaker due to additional error terms scaling on $x_t-c_t$.  However, this result is still useful for providing linear convergence results on a wider class of functions since \cite{shin2021caputo} only studied this method applied to quadratic functions.
\subsection{Higher Dimensional Results}
We can also consider $f:\mathbb{R}^k\to\mathbb{R}$ by doing all operations coordinate-wise.  Following \cite{shin2021caputo}, the natural extension for the fractional gradient descent operator for $f$ is:
\begin{align*}
    \Cd{{c}}f(x) = \left[\Cd{{c^{(1)}}}f_{1,x}(x^{(1)}),...,\Cd{{c^{(k)}}}f_{k,x}(x^{(k)})\right].
\end{align*}
Here $f_{j,x}(y) = f(x+(y-x^{(j)})e^{(j)})$ with $e^{(j)}$ the unit vector in the $j$th coordinate.

Generalizations of Theorem~\ref{thm:beta_unified_low} can now be proven.  We first assume that $f$ has a special form, namely that it is a sum of single dimensional functions.
\begin{corollary}
    Suppose $f:\mathbb{R}^k\to\mathbb{R}$ is twice differentiable, $L$-smooth, and $\mu$-strongly convex.  Assume $f$ is of form $f(x) = \sum_{i=1}^k f_i(x^{(i)})$ where $f_i:\mathbb{R}\to\mathbb{R}$.  Set $0<\alpha<1$ and $\beta \in \mathbb{R}$.  If $\beta\geq0$, define $K_1$, $K_2$ as in Theorem~\ref{thm:pos_bound}; if $\beta\leq0$, define $K_1$, $K_2$ as in Theorem~\ref{thm:neg_bound}.  Let the fractional gradient descent method be defined as follows. 
    \begin{itemize}
        \item $x_{t+1} = x_{t} - \eta_t \Cd{{c_t}}f(x_t)$
        \item $\eta_t = \frac{(1-K_1\lambda_t-K_2|\lambda_t|)\phi}{(1-K_1\lambda_t+K_2|\lambda_t|)^2L}$ for $0<\phi<2$
        \item $x_t-c_t = -\lambda_t \nabla f(x_t)$ with $1-K_1\lambda_t-K_2|\lambda_t| > \epsilon > 0$
    \end{itemize}
    Then, this method achieves the following linear convergence rate:
    \begin{align*}
        \norm{x_{t+1}-x^*}_2^2&\leq \left[1-(2-\phi)\phi\frac{\mu}{L}\left(\frac{1-K_1\lambda_t-K_2|\lambda_t|}{1-K_1\lambda_t+K_2|\lambda_t|}\right)^2\right]\norm{x_{t}-x^*}_2^2.
    \end{align*}
    In particular, however, this rate is at best the same as gradient descent.
    \label{cor:beta_unified_low}
\end{corollary}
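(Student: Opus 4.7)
The plan is to exploit the separability $f(x)=\sum_{i=1}^k f_i(x^{(i)})$ to decouple the iteration into $k$ independent one-dimensional updates, and then to apply Theorem~\ref{thm:beta_unified_low} component-wise. This works because both the fractional gradient operator (defined coordinate-wise in the preceding display) and the terminal rule $x_t - c_t = -\lambda_t \nabla f(x_t)$ act on each coordinate independently when $f$ splits as a sum.

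First I would verify that the $j$th component of the higher-dimensional fractional descent operator collapses to the one-dimensional operator applied to $f_j$. By separability,
\begin{align*}
 f_{j,x_t}(y) = f\!\left(x_t + (y - x_t^{(j)})e^{(j)}\right) = f_j(y) + \sum_{i \neq j} f_i(x_t^{(i)}),
\end{align*}
and the trailing sum is constant in $y$, so it contributes nothing to either $\CD{{c_t^{(j)}}}f_{j,x_t}$ or $\CD[{1+\alpha}]{{c_t^{(j)}}} f_{j,x_t}$. Hence $\Cd{{c_t^{(j)}}} f_{j,x_t}(x_t^{(j)}) = \Cd{{c_t^{(j)}}} f_j(x_t^{(j)})$. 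Moreover $\nabla f(x_t)_j = f_j'(x_t^{(j)})$, so the terminal rule reduces to $x_t^{(j)} - c_t^{(j)} = -\lambda_t f_j'(x_t^{(j)})$ in each coordinate, which is exactly the single-dimensional prescription applied to $f_j$.

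Next I would check that each $f_j$ inherits the hypotheses of Theorem~\ref{thm:beta_unified_low}. Because the Hessian of a separable $f$ is diagonal, $L$-smoothness and $\mu$-strong convexity of $f$ force $\mu \leq f_j''(\cdot) \leq L$ for each $j$, so every $f_j$ is itself $L$-smooth and $\mu$-strongly convex. Separability also implies that the minimizer $x^*$ of $f$ satisfies $x^{*(j)} = \arg\min_y f_j(y)$. Since $\eta_t$, $\lambda_t$, $\phi$, $K_1$, $K_2$ are shared across coordinates, applying Theorem~\ref{thm:beta_unified_low} to the one-dimensional iteration on $f_j$ yields, for every $j$, a bound of the form $|x_{t+1}^{(j)} - x^{*(j)}|^2 \leq \rho\, |x_t^{(j)} - x^{*(j)}|^2$ with the common rate factor $\rho$ appearing in the corollary's statement. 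Summing over $j$ and using $\norm{y}_2^2 = \sum_j |y^{(j)}|^2$ delivers the claimed inequality.

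The main work is in the first step, showing that both the fractional gradient operator and the terminal rule decouple exactly into their one-dimensional analogues on the summands, but this is essentially a bookkeeping unpacking of the Caputo definition combined with the constant-in-$y$ observation above. Once the decoupling is established, the remaining steps are an immediate coordinate-by-coordinate application of the single-dimensional theorem, and the closing remark that the rate is ``at best the same as gradient descent'' transfers verbatim.
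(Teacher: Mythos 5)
Your proposal is correct and follows essentially the same route as the paper's proof: decouple the separable objective into $k$ independent one-dimensional iterations (noting the fractional operator, gradient, and minimizer all split coordinate-wise), apply Theorem~\ref{thm:beta_unified_low} to each coordinate with the shared hyperparameters, and sum the squared errors. Your extra details---that the constant $\sum_{i\neq j} f_i(x_t^{(i)})$ vanishes under the Caputo derivative, and that the diagonal Hessian gives $\mu \leq f_j'' \leq L$ for each summand---are correct elaborations of steps the paper states more tersely.
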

\begin{proof}
    If $f$ is $L$-smooth, each $f_{j,x}(y)$ is $L$-smooth according to the single dimensional definition and all relevant results hold for it.  The same goes for $\mu$-strong convexity.  Note that taking the derivative of each $f_{j,x}(y)$ is the same as taking the $j$th partial derivative of $f$ at $x$ with the $j$th coordinate replaced by $y$.  Thus, $\Cd{{c}}f(x) = [\Cd{{c^{(1)}}}f_{1}(x^{(1)}),...,\Cd{{c^{(k)}}}f_{k}(x^{(k)})]$.  Additionally, $\nabla f(x) = [f_1'(x^{(1)}),...,f_k'(x^{(k)})]$.  As such, the optimal point of $f$ in the $i$th coordinate is the optimal point of $f_i$.  Therefore, Theorem~\ref{thm:beta_unified_low} holds coordinate wise.  This immediately gives the result.
\end{proof}

In the more general case, there is a single term that does not immediately generalize to higher dimensions proportional to $\dotp{|\nabla f(x_t)|}{|x_t-x^*|}$ if the absolute value is taken element wise.  For the single dimension case, convexity allowed us to bypass this issue, but for higher dimensions, we have to use Cauchy–Schwarz.
\begin{theorem}
    Suppose $f:\mathbb{R}^k\to\mathbb{R}$ is twice differentiable, $L$-smooth, and $\mu$-strongly convex.  Set $0<\alpha<1$ and $\beta \in \mathbb{R}$.  If $\beta\geq0$, define $K_1$, $K_2$ as in Theorem~\ref{thm:pos_bound}; if $\beta\leq0$, define $K_1$, $K_2$ as in Theorem~\ref{thm:neg_bound}.  Let the fractional gradient descent method be defined as follows.
    \begin{itemize}
        \item $x_{t+1} = x_{t} - \eta_t \Cd{{c_t}}f(x_t)$
        \item $\eta_t =  \frac{\frac{\phi}{L}(1-K_1\lambda_t)-\frac{2K_2|\lambda_t|}{\mu}}{(1-K_1\lambda_t+K_2|\lambda_t|)^2}$ for $0<\phi<2$
        \item $x_t-c_t = -\lambda_t \nabla f(x_t)$ with $\frac{\phi(1-K_1\lambda_t)}{L}-\frac{2K_2|\lambda_t|}{\mu}>\epsilon>0$.
    \end{itemize}
    Then, this method achieves the following linear convergence rate:
    \begin{align*}
        \norm{x_{t+1}-x^*}_2^2
        &\leq\left[1-\frac{(2-\phi)\mu(1-K_1\lambda_t)\left[\frac{\phi}{L}(1-K_1\lambda_t)-\frac{2K_2|\lambda_t|}{\mu}\right]}{(1-K_1\lambda_t+K_2|\lambda_t|)^2}\right]\norm{x_t-x^*}_2^2.
    \end{align*}
    In particular, however, this rate is at best the same as gradient descent.
    \label{thm:beta_unified}
\end{theorem}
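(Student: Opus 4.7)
The plan is to work coordinate-wise to reduce to the single-dimensional machinery already established and then handle the one step where the 1D proof exploited sign alignment that fails in higher dimensions. First, I would apply Theorem~\ref{thm:pos_bound} or Theorem~\ref{thm:neg_bound} to each slice $f_{i,x_t}$, noting that the update rule gives $x_t^{(i)}-c_t^{(i)} = -\lambda_t \partial_i f(x_t)$ coordinate-wise. This yields, for each $i$,
\begin{align*}
\left|(\Cd{c_t}f(x_t))_i - (1-K_1\lambda_t)\partial_i f(x_t)\right| \leq K_2|\lambda_t|\,|\partial_i f(x_t)|,
\end{align*}
and squaring and summing over $i$ gives the vector bound $\norm{\Cd{c_t}f(x_t) - (1-K_1\lambda_t)\nabla f(x_t)}_2 \leq K_2|\lambda_t|\norm{\nabla f(x_t)}_2$.

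From here I would expand $\norm{x_{t+1}-x^*}_2^2 = \norm{x_t-x^*}_2^2 - 2\eta_t\dotp{\Cd{c_t}f(x_t)}{x_t-x^*} + \eta_t^2\norm{\Cd{c_t}f(x_t)}_2^2$, writing $\Cd{c_t}f(x_t) = (1-K_1\lambda_t)\nabla f(x_t) + r_t$ with $\norm{r_t}_2 \leq K_2|\lambda_t|\norm{\nabla f(x_t)}_2$. The triangle inequality handles $\norm{\Cd{c_t}f(x_t)}_2 \leq (1-K_1\lambda_t+K_2|\lambda_t|)\norm{\nabla f(x_t)}_2$. For $\dotp{r_t}{x_t-x^*}$ I apply Cauchy--Schwarz and then use the strong convexity consequence $\mu\norm{x_t-x^*}_2 \leq \norm{\nabla f(x_t)}_2$ to upgrade $\norm{\nabla f(x_t)}_2\norm{x_t-x^*}_2 \leq \norm{\nabla f(x_t)}_2^2/\mu$.

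The remaining $\norm{\nabla f(x_t)}_2^2$ factor is converted to an inner product by the co-coercivity inequality $\norm{\nabla f(x_t)}_2^2 \leq L\dotp{\nabla f(x_t)}{x_t-x^*}$ valid for convex $L$-smooth $f$. Substituting the prescribed step size $\eta_t$, the algebra is engineered so that the coefficient of $\dotp{\nabla f(x_t)}{x_t-x^*}$ collapses exactly to $(1-K_1\lambda_t)(\phi-2)$, which is strictly negative by the positivity assumption on the numerator of $\eta_t$. A final application of $\dotp{\nabla f(x_t)}{x_t-x^*} \geq \mu\norm{x_t-x^*}_2^2$ delivers the claimed contraction factor.

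The main obstacle, flagged in the paragraph preceding the theorem, is the cross term $\dotp{r_t}{x_t-x^*}$. In one dimension, convexity forces $f'(x_t)$ and $x_t-x^*$ to share sign, so $|f'(x_t)||x_t-x^*| = f'(x_t)(x_t-x^*)$ and the perturbation absorbs directly into $\dotp{\nabla f(x_t)}{x_t-x^*}$; this is what produced the symmetric factor $(1-K_1\lambda_t-K_2|\lambda_t|)/(1-K_1\lambda_t+K_2|\lambda_t|)$ in Theorem~\ref{thm:beta_unified_low}. In $\mathbb{R}^k$ the signs are not aligned coordinate-wise, and Cauchy--Schwarz is the best tool available; the ensuing $\norm{\nabla f(x_t)}_2\norm{x_t-x^*}_2$ must be controlled by $\mu$-strong convexity, which is precisely what produces the tighter step-size requirement $\frac{\phi}{L}(1-K_1\lambda_t)-\frac{2K_2|\lambda_t|}{\mu} > \epsilon$ and is ultimately the reason the rate cannot outperform standard gradient descent.
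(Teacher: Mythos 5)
Your proposal is correct and takes essentially the same route as the paper's proof: the same coordinate-wise application of Theorem~\ref{thm:pos_bound}/Theorem~\ref{thm:neg_bound} with $x_t^{(i)}-c_t^{(i)}=-\lambda_t\partial_i f(x_t)$, the same three-point expansion, and the same Cauchy--Schwarz plus $\mu\norm{x_t-x^*}_2\leq\norm{\nabla f(x_t)}_2$ treatment of the cross term, which is exactly what produces the tighter condition $\frac{\phi(1-K_1\lambda_t)}{L}-\frac{2K_2|\lambda_t|}{\mu}>\epsilon$. Packaging the error as a residual vector $r_t$ and converting every $\norm{\nabla f(x_t)}_2^2$ term to $\dotp{\nabla f(x_t)}{x_t-x^*}$ via co-coercivity before one final application of strong convexity is only a cosmetic reordering of the paper's $\phi/(2-\phi)$ split of the inner product; both collapse to the identical contraction factor $1-(2-\phi)\mu(1-K_1\lambda_t)\eta_t$.
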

\begin{proof}
    Follows by very similar logic to Theorem~\ref{thm:beta_unified_low} except generalized to higher dimensions by taking operations coordinate-wise.  The key difference as mentioned above is a single term whose bound requires more care.  Similarly, the rate is at best the same as gradient descent where we expect a $1-\frac{\mu}{L}$ coefficient since $K_2|\lambda_t|\geq0$  See Appendix~\ref{High_Unified_Proof} for details.
\end{proof}
As a remark on the condition on $\lambda_t$, in the special case of $\lambda_t\geq0$, $K_1\geq0$, we have:
    \begin{align*}
        &\lambda_t\left(\frac{\phi K_1}{L}+\frac{2kK_2}{\mu}\right)<\frac{\phi}{L}\\
        \implies &\lambda_t<\frac{1}{K_1+\frac{2kK_2L}{\phi\mu}}.
    \end{align*}
For implementing the learning rate for the method in this section, we note that as $L\to\infty$, $\eta_t\to0$.  Since $L$ can always be increased while still satisfying smoothness, in principle the learning rate just needs to be chosen small enough same as in standard gradient descent.  Similarly, for sufficiently small $\lambda_t$, the condition will always be satisfied.  Of course, finding optimal hyper-parameters is important and techniques like line search can be employed to improve convergence rate.
\section{Smooth and Convex Optimization}\label{Smooth-Optimization}
This section considers optimizing a $L$-smooth and convex function, $f:\mathbb{R}\to\mathbb{R}$.  For similar reasons as the previous section, the case $p\neq 1$ is deferred for future work.  The fractional gradient descent method for this section will be identical to the previous section.  Similarly to the prior section, we split into two cases: 1) that $f$ is a sum of single dimensional functions and 2) that $f$ is a general higher dimensional function.  For the first case, we have the following.
\begin{theorem}
    Suppose $f:\mathbb{R}^k\to\mathbb{R}$ is twice differentiable, $L$-smooth, and convex.  Assume $f$ is of form $f(x) = \sum_{i=1}^k f_i(x^{(i)})$ where $f_i:\mathbb{R}\to\mathbb{R}$.  Set $0<\alpha<1$ and $\beta \in \mathbb{R}$.  If $\beta\geq0$, define $K_1$, $K_2$ as in Theorem~\ref{thm:pos_bound}; if $\beta\leq0$, define $K_1$, $K_2$ as in Theorem~\ref{thm:neg_bound}.  Let the fractional gradient descent method be defined as follows. 
    \begin{itemize}
        \item $x_{t+1} = x_{t} - \eta \Cd{{c_t}}f(x_t)$
        \item $\eta = \frac{1}{L}\left[\frac{2(1-\lambda K_1-|\lambda|K_2)}{1-\lambda K_1+|\lambda|K_2)^2}-\frac{1}{1-\lambda K_1-|\lambda|K_2}\right]$
        \item $x_t-c_t = -\lambda \nabla f(x_t)$ with $1-\lambda K_1 > \frac{\sqrt{2}+1}{\sqrt{2}-1}|\lambda|K_2$.
    \end{itemize}
    Then, this method achieves the following $O(1/T)$ convergence rate with $\bar{x_T}$ as the average of all $x_t$ for $1\leq t\leq T$:
    \begin{align*}
        f(\bar{x_T})-f(x^*)\leq \frac{L\norm{x_0-x^*}_2^2}{\left(4\left(\frac{1-\lambda K_1-|\lambda|K_2}{1-\lambda K_1+|\lambda|K_2}\right)^2-2\right)T}.
    \end{align*}
    In particular, however, this rate is at best the same as gradient descent.
    \label{thm:smooth_separable}
\end{theorem}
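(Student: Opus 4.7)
The plan is to adapt the standard $O(1/T)$ smooth-convex gradient descent analysis, replacing $\nabla f$ by the fractional operator $\Cd{{c_t}}f$ and using the coordinate-wise relations from Theorem~\ref{thm:pos_bound} and Theorem~\ref{thm:neg_bound}. Write $a := 1-\lambda K_1-|\lambda|K_2$ and $b := 1-\lambda K_1+|\lambda|K_2$; the hypothesis $1-\lambda K_1 > \tfrac{\sqrt{2}+1}{\sqrt{2}-1}|\lambda|K_2$ is precisely $\sqrt{2}\,a > b$, hence $2a^2 > b^2$, which in particular forces $a>0$ and makes the stated $\eta$ positive. Because $f(x)=\sum_i f_i(x^{(i)})$ is separable, each $f_i$ is $L$-smooth and convex as a one-dimensional function, the optimum splits coordinate-wise so that $x^{*(i)}$ minimizes $f_i$, and both $\Cd{{c_t}}f$ and $\nabla f$ act coordinate-wise.

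Applied to each coordinate with $x_t^{(i)}-c_t^{(i)} = -\lambda f_i'(x_t^{(i)})$, the appropriate single-dimensional bound yields $|\Cd{{c_t^{(i)}}}f_i(x_t^{(i)}) - (1-\lambda K_1)f_i'(x_t^{(i)})| \leq |\lambda|K_2|f_i'(x_t^{(i)})|$, so each $\Cd{{c_t^{(i)}}}f_i$ carries the sign of $f_i'$ with magnitude between $a|f_i'|$ and $b|f_i'|$. Summing over coordinates produces three key inequalities: (I) $\dotp{\nabla f(x_t)}{\Cd{{c_t}}f(x_t)} \geq a\norm{\nabla f(x_t)}_2^2$; (II) $\norm{\Cd{{c_t}}f(x_t)}_2^2 \leq b^2\norm{\nabla f(x_t)}_2^2$; and (III) $\dotp{\Cd{{c_t}}f(x_t)}{x_t-x^*} \geq a\dotp{\nabla f(x_t)}{x_t-x^*} \geq a(f(x_t)-f(x^*))$. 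The first inequality in (III) is the delicate step: it uses coordinate-wise convexity $f_i'(x_t^{(i)})(x_t^{(i)}-x^{*(i)}) \geq f_i(x_t^{(i)})-f_i(x^{*(i)}) \geq 0$ to ensure every coordinate contribution carries the correct sign before the one-sided perturbation bound is inserted; the second inequality is then global convexity of $f$.

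Feeding (I)--(II) into the $L$-smooth descent lemma yields $f(x_{t+1}) \leq f(x_t) - \eta(a - L\eta b^2/2)\norm{\nabla f(x_t)}_2^2$, while expanding $\norm{x_{t+1}-x^*}_2^2$ and applying (II)--(III) gives $\norm{x_{t+1}-x^*}_2^2 \leq \norm{x_t-x^*}_2^2 - 2\eta a(f(x_t)-f(x^*)) + \eta^2 b^2\norm{\nabla f(x_t)}_2^2$. The specific $\eta = L^{-1}\bigl[\tfrac{2a}{b^2} - \tfrac{1}{a}\bigr]$ is calibrated precisely so that $a-L\eta b^2/2 = b^2/(2a)$; substituting the descent inequality replaces the leftover $\eta^2 b^2\norm{\nabla f(x_t)}_2^2$ by exactly $2\eta a(f(x_t)-f(x_{t+1}))$, producing the telescoping inequality $\norm{x_{t+1}-x^*}_2^2 \leq \norm{x_t-x^*}_2^2 - 2\eta a(f(x_{t+1})-f(x^*))$. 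Summing over $t$, applying Jensen's inequality to pass from $\tfrac{1}{T}\sum_{t=1}^T f(x_t)$ to $f(\bar x_T)$, and simplifying $2\eta a L = 4(a/b)^2-2$ delivers the claimed rate. The main obstacle will be the signed bound (III): the $\ell_2$-norm bound (II) follows from magnitudes alone, but the inner product must track signs coordinate by coordinate, and this is exactly where separability is indispensable---without it the product $f_i'(x_t^{(i)})(x_t^{(i)}-x^{*(i)})$ need not be non-negative, so the one-sided perturbation bound could not be turned into a signed lower bound.
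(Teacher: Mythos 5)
Your proposal is correct and follows essentially the same route as the paper's proof: the coordinate-wise reduction via separability, the bounds $a\norm{\nabla f(x_t)}_2^2 \leq \dotp{\nabla f(x_t)}{\Cd{{c_t}}f(x_t)}$ and $\norm{\Cd{{c_t}}f(x_t)}_2^2\leq b^2\norm{\nabla f(x_t)}_2^2$, the sign argument using single-dimensional convexity for the inner product with $x_t-x^*$, the calibration of $\eta$ so the gradient-norm terms cancel, and the telescoping-plus-Jensen finish are all the same steps (the paper merely organizes the cancellation by solving for $\eta$ in the combined bound rather than stating $a - L\eta b^2/2 = b^2/(2a)$ up front). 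You also correctly flag the one place the paper itself flags as delicate, namely that the signed bound on $\dotp{\Cd{{c_t}}f(x_t)}{x_t-x^*}$ is exactly where convexity and separability are indispensable.
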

\begin{proof}
    By similar reasoning as Corollary~\ref{cor:beta_unified_low}, we can reduce to the single dimensional case.  This case follows by applying Theorem~\ref{thm:pos_bound} and Theorem~\ref{thm:neg_bound} to bound the fractional gradient descent operator with an approximation in terms of the first derivative.  Then, the proof of standard gradient descent rate for smooth and convex functions can be followed with additional error terms from the approximation depending on $x_t-c_t$.  This rate is at best the same as gradient descent (which has coefficient $\frac{L}{2T}$) since $|\lambda| K_2\geq0$.  See Appendix~\ref{Smooth_Low_Proof} for details.
\end{proof}
For the general case, just as in the previous section, there is a single term proportional to $\dotp{|\nabla f(x_t)|}{|x_t-x^*|}$ if the absolute value is taken element wise that must be dealt with more carefully.  This term ends up making the method somewhat more complicated.
\begin{theorem}
    Suppose $f:\mathbb{R}^k\to\mathbb{R}$ is twice differentiable, $L$-smooth, and convex.  Set $0<\alpha<1$ and $\beta \in \mathbb{R}$.  If $\beta\geq0$, define $K_1$, $K_2$ as in Theorem~\ref{thm:pos_bound}; if $\beta\leq0$, define $K_1$, $K_2$ as in Theorem~\ref{thm:neg_bound}.  Let the fractional gradient descent method be defined as follows. 
    \begin{itemize}
        \item $x_{t+1} = x_{t} - \eta_t \Cd{{c_t}}f(x_t)$
        \item $\eta_t = \frac{1}{L}\left[\frac{2(1-\lambda_t K_1-|\lambda_t|K_2)}{(1-\lambda_t K_1+|\lambda_t|K_2)^2}-\frac{1}{1-\lambda_t K_1}\right] = \frac{1}{L(1-\lambda_tK_1)}\left[\frac{s_t^2-4s_t-1}{(s_t+1)^2}\right]$
        \item $x_t-c_t = -\lambda_t \nabla f(x_t)$ with $1-\lambda_t K_1 = s_t|\lambda_t|K_2$
        \item $\frac{(s_{t+1}+1)^2}{s_{t+1}^2-4s_{t+1}-1}+\frac{2}{s_{t+1}}\leq \frac{(s_{t}+1)^2}{s_{t}^2-4s_{t}-1}$ with $s_0>\sqrt{5}+2$.
    \end{itemize}
    Then, this method achieves the following $O(1/T)$ convergence rate with $\bar{x_T}$ as the average of all $x_t$ for $1\leq t\leq T$:
    \begin{align*}
        f(\bar{x_T})-f(x^*)\leq \frac{L}{2}\left[\frac{(s_{0}+1)^2}{s_{0}^2-4s_{0}-1}+\frac{2}{s_0}\right]\frac{\norm{x_0-x^*}_2^2}{T}.
    \end{align*}
    In particular, however, this rate is at best the same as gradient descent.
    \label{thm:smooth}
\end{theorem}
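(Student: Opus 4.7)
The plan is to mirror the proof of Theorem~\ref{thm:smooth_separable} but adapted to the non-separable higher dimensional setting, in the same way Theorem~\ref{thm:beta_unified} extended Corollary~\ref{cor:beta_unified_low}. I would first use $x_t-c_t=-\lambda_t\nabla f(x_t)$ together with the coordinate-wise versions of Theorem~\ref{thm:pos_bound} and Theorem~\ref{thm:neg_bound} to bound each coordinate of $\Cd{c_t}f(x_t)$ by $(1-\lambda_tK_1)$ times the corresponding coordinate of $\nabla f(x_t)$, up to an additive slack of absolute value at most $|\lambda_t|K_2$ times that coordinate. This produces a coordinate-wise sandwich relating the fractional operator to the ordinary gradient.

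Next I would run the classical smooth convex gradient descent template: expand $\norm{x_{t+1}-x^*}_2^2 = \norm{x_t-x^*}_2^2 - 2\eta_t\dotp{\Cd{c_t}f(x_t)}{x_t-x^*} + \eta_t^2\norm{\Cd{c_t}f(x_t)}_2^2$, invoke $L$-smoothness to relate the squared-norm term to $f(x_{t+1})-f(x_t)$, and invoke convexity as $\dotp{\nabla f(x_t)}{x_t-x^*}\geq f(x_t)-f(x^*)$. Substituting the coordinate-wise bounds produces exactly the offending term $\dotp{|\nabla f(x_t)|}{|x_t-x^*|}$ (with absolute values element-wise) that the separable proof killed coordinate by coordinate. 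In the general case, I would bound it via Cauchy-Schwarz by $\norm{\nabla f(x_t)}_2\norm{x_t-x^*}_2$ and split with Young's inequality $\norm{\nabla f(x_t)}_2\norm{x_t-x^*}_2\leq \frac{1}{2a_t}\norm{\nabla f(x_t)}_2^2+\frac{a_t}{2}\norm{x_t-x^*}_2^2$ for a carefully tuned weight $a_t$.

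The parametrization $1-\lambda_tK_1=s_t|\lambda_t|K_2$ in the statement is exactly what makes the algebra close: after substitution, the stated $\eta_t$ reduces the $\norm{\nabla f(x_t)}_2^2$ coefficient to the value needed for the telescoping. Defining the potential $V_t=\frac{(s_t+1)^2}{s_t^2-4s_t-1}\norm{x_t-x^*}_2^2$ and choosing $a_t$ so that the Young slack contributes precisely $\frac{2}{s_{t+1}}\norm{x_{t+1}-x^*}_2^2$ at the next step, the hypothesis $\frac{(s_{t+1}+1)^2}{s_{t+1}^2-4s_{t+1}-1}+\frac{2}{s_{t+1}}\leq \frac{(s_{t}+1)^2}{s_{t}^2-4s_{t}-1}$ becomes exactly the telescoping condition $V_t-V_{t+1}\geq \frac{2}{L}(f(x_{t+1})-f(x^*))$. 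Summing over $t=0,\ldots,T-1$, dropping $V_T\geq 0$, and applying Jensen's inequality to $\bar x_T$ yields the stated $O(1/T)$ bound. The requirement $s_0>\sqrt{5}+2$ guarantees $s_0^2-4s_0-1>0$, so $V_0$ (and hence every $V_t$ by the monotonicity condition) is nonnegative and finite.

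The main obstacle is pinning down the Young weight $a_t$ and the associated algebra so that the Cauchy-Schwarz slack deposits exactly $\frac{2}{s_{t+1}}\norm{x_{t+1}-x^*}_2^2$ on the next step; this is the origin of the unusual-looking sequence condition on $s_t$ and is what forces the varying $s_t$, $\lambda_t$, and $\eta_t$ here while the separable analogue Theorem~\ref{thm:smooth_separable} admits fixed values. Once that weight is identified, the remainder of the argument is routine manipulation of the smooth convex template.
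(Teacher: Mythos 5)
Your outline reproduces the paper's proof almost step for step: same coordinate-wise sandwich from Theorems~\ref{thm:pos_bound} and~\ref{thm:neg_bound}, same smoothness step bounding $f(x_{t+1})-f(x_t)$, same three-point identity, Cauchy--Schwarz on the offending term $\dotp{|\nabla f(x_t)|}{|x_t-x^*|}$, the parametrization $1-\lambda_tK_1=s_t|\lambda_t|K_2$, the telescoping condition on $s_t$, and the final Jensen step. The one place you diverge is how the Cauchy--Schwarz slack is absorbed. The paper does not use a free-weight Young split: it applies the Lipschitz-gradient bound $\norm{\nabla f(x_t)}_2\leq L\norm{x_t-x^*}_2$ (valid since $\nabla f(x^*)=0$), so the whole slack becomes $2\eta_tK_2|\lambda_t|L\norm{x_t-x^*}_2^2$ at the \emph{current} index; after dividing by $2\eta_t(1-\lambda_tK_1)=2\eta_ts_t|\lambda_t|K_2$ and using $s_t\eta_t|\lambda_t|K_2=\frac{1}{L}\bigl[\frac{s_t^2-4s_t-1}{(s_t+1)^2}\bigr]$, this is exactly the $\frac{2}{s_t}$ term; the $\frac{2}{s_{t+1}}$ in the hypothesis is simply this same per-step slack evaluated at index $t+1$ in the telescoping comparison, not something deposited forward from step $t$. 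This distinction matters because the stated $\eta_t$ is derived precisely so that the $\norm{\nabla f(x_t)}_2^2$ coefficients cancel \emph{exactly}; a genuine Young split with finite weight $a_t$ leaves a residual $\frac{\eta_tK_2|\lambda_t|}{a_t}\norm{\nabla f(x_t)}_2^2$ that the stated $\eta_t$ can no longer absorb, so your ``carefully tuned $a_t$'' is forced to degenerate: effectively $a_t=L$ combined with $\frac{1}{2L}\norm{\nabla f(x_t)}_2^2\leq\frac{L}{2}\norm{x_t-x^*}_2^2$, which collapses back to the paper's bound. With that one correction your argument coincides with the paper's; everything else, including the role of $s_0>\sqrt{5}+2$ in keeping $\eta_t>0$ and the telescoped constant $\frac{L}{2}\bigl[\frac{(s_0+1)^2}{s_0^2-4s_0-1}+\frac{2}{s_0}\bigr]$, is as you describe.
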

\begin{proof}
    Follows by very similar logic to Theorem~\ref{thm:smooth_separable}.  The key difference as mentioned above is a single term whose bound requires more care.  This term ends up causing difficulties when passing to telescope sum.  This motivates step-dependent $\eta_t$ and $\lambda_t$ which are determined by an underlying increasing sequence $s_t$.  This rate is at best the same as standard gradient descent since $\left[\frac{(s_{0}+1)^2}{s_{0}^2-4s_{0}-1}+\frac{2}{s_0}\right]\geq1$ and gradient descent's rate has coefficient $\frac{L}{2T}$.  See Appendix~\ref{Smooth_Proof} for details.
\end{proof}
For implementation, this theorem requires first choosing the $s$ sequence satisfying the algebraic condition.  Then, with an approximation of $L$ (which gradient descent also requires), $\lambda_t$ and $\eta_t$ can be computed following the formulas.  In general, the condition on $\lambda_t$ has two solutions: one positive and one negative.

\section{Smooth and Non-Convex Optimization}\label{Smooth-Non-Convex-Optimization}
\newcommand{\Cdp}[2][{\alpha}]{{}^C_p\delta^#1_#2}
\subsection{Fractional Gradient Descent Method}
This section will focus on fractional gradient descent in a smooth and non-convex setting.  This setting turns out to be the most straightforward to generalize to $p\neq 1$ and demonstrates potential for fractional gradient descent to be more natural.  \cite{Berger2020} approaches this setting by varying the learning rate in gradient descent.  For this section, we will adapt their proof in 3.1 to fractional gradient descent.  We use a similar fractional gradient descent method as the previous sections defined as:
\begin{align*}
    x_{t+1} = x_{t} - \eta_t \Cdp{{c_t}}f(x_t)
\end{align*}
where (for $f:\mathbb{R}\to\mathbb{R}$):
\begin{align*}
   \Cdp{{c}}f(x) &=\frac{\CD{c}f(x)\Gamma(2-\alpha)}{x-c}|x-c|^{\alpha-p+1}.
\end{align*}
The difference from previous sections is that the second term is dropped since Lemma~\ref{lem:2nd_bounds} no longer holds and the exponent of $|x-c|$ now depends on $p$.  For higher dimensional $f$, we use the same extension as in the previous sections where each component follows this definition.  Namely, for $f:\mathbb{R}^k\to\mathbb{R}$, the definition is
\begin{align*}
    \Cdp{{c}}f(x) = \left[\Cdp{{c^{(1)}}}f_{1,x}(x^{(1)}),...,\Cdp{{c^{(k)}}}f_{k,x}(x^{(k)})\right].
\end{align*}
\subsection{Convergence Results}
For easing convergence analysis computation, we leverage the following Lemma.
\begin{lemma}
    Suppose $f:\mathbb{R}\to\mathbb{R}$ is continuously differentiable.  If $f$ is $(L, p)$-H\"older smooth, $\alpha \in (0,1]$,  then
    \begin{align*}
        \left|f'(x)|x-c|^{1-p} - \Cdp{{c}}f(x)\right|\leq K|x-c|
    \end{align*}
    where $K= \frac{L(1-\alpha)}{(1+p-\alpha)}$.
    \label{lem:Lp_bound}
\end{lemma}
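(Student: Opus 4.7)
The plan is to reduce the lemma directly to Corollary~\ref{cor:smooth_bound} by a carefully chosen multiplicative rescaling, so no new integration-by-parts work is needed.

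First I would unpack the definition of $\Cdp{{c}}f(x)$. Writing $x-c = \sgn(x-c)|x-c|$, the operator becomes
\begin{align*}
    \Cdp{{c}}f(x) = \Gamma(2-\alpha)\,\sgn(x-c)\,|x-c|^{\alpha-p}\,\CD{c}f(x).
\end{align*}
This isolates the only substantive difference from $\CD{c}f(x)$: a positive scaling factor $\Gamma(2-\alpha)|x-c|^{\alpha-p}$ and an overall sign. In particular, the Caputo derivative itself is exactly what Corollary~\ref{cor:smooth_bound} controls, so the correct move is to bring that corollary's inequality into the form of the claim by multiplying through by this same factor.

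Next I would take the inequality of Corollary~\ref{cor:smooth_bound} and multiply both sides by the positive quantity $\Gamma(2-\alpha)|x-c|^{\alpha-p}$, which preserves the absolute value. On the left-hand side, the $\Gamma(2-\alpha)$ and a power of $|x-c|$ cancel in the $f'(x)$ term to give $f'(x)\sgn(x-c)|x-c|^{1-p}$, while the $\CD{c}f(x)$ term becomes $\sgn(x-c)\Cdp{{c}}f(x)$ by the identity above. Factoring out the common sign leaves exactly $\bigl|f'(x)|x-c|^{1-p} - \Cdp{{c}}f(x)\bigr|$, matching the target LHS.

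On the right-hand side, the exponents combine as $|x-c|^{\alpha-p}\cdot|x-c|^{1+p-\alpha} = |x-c|$, and the coefficient becomes $\frac{\Gamma(2-\alpha)L}{\Gamma(1-\alpha)(1+p-\alpha)}$. Applying the functional equation $\Gamma(2-\alpha)=(1-\alpha)\Gamma(1-\alpha)$ collapses this to $K=\frac{L(1-\alpha)}{1+p-\alpha}$, as required. The only mildly delicate step is the sign/exponent bookkeeping in the first move; once the identity $\Cdp{{c}}f(x) = \sgn(x-c)\,\Gamma(2-\alpha)|x-c|^{\alpha-p}\CD{c}f(x)$ is in hand, the remainder is routine algebra, and the edge case $x=c$ is handled trivially since both sides vanish.
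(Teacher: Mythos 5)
Your proposal is correct and takes essentially the same route as the paper, whose proof of Lemma~\ref{lem:Lp_bound} is exactly ``apply Corollary~\ref{cor:smooth_bound} and rearrange.'' Your identity $\Cdp{{c}}f(x)=\Gamma(2-\alpha)\sgn(x-c)|x-c|^{\alpha-p}\,\CD{c}f(x)$, the multiplication of the corollary's inequality by the positive factor $\Gamma(2-\alpha)|x-c|^{\alpha-p}$, and the simplification via $\Gamma(2-\alpha)=(1-\alpha)\Gamma(1-\alpha)$ are precisely the rearrangement the paper leaves implicit.
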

\begin{proof}
    Using Corollary~\ref{cor:smooth_bound}, rearranging terms gives this bound directly.
\end{proof}

Now, we present a $O(1/T)$ convergence result to a stationary point as follows.
\begin{theorem}
    Suppose $f:\mathbb{R}^k\to\mathbb{R}$ is continuously differentiable, $(L, p)$-H\"older smooth, and $\forall x$, $f(x)\geq f^*$.  Set $0<\alpha<1$.  Define $K$ as in Lemma~\ref{lem:Lp_bound}.  Let the fractional gradient descent method be defined as follows. 
    \begin{itemize}
        \item $x_{t+1} = x_{t} - \eta \Cdp{{c_t}}f(x_t)$
        \item $\left|x_t^{(i)}-c_t^{(i)}\right| = \lambda \sqrt[\leftroot{-2}\uproot{3}p]{\left|\frac{\partial f}{\partial x^{(i)}}(x_t)\right|}$ with $0 < \lambda < \sqrt[\leftroot{-2}\uproot{3}p]{\frac{1}{K}}$
        \item $0 < \eta < \sqrt[\leftroot{-2}\uproot{3}p]{\frac{(1+p)(\lambda^{1-p}-K\lambda)}{L(\lambda^{1-p}+K\lambda)^{1+p}}}$
    \end{itemize}
    Then, this method achieves the following convergence rate:
    \begin{align*}
        \min_{0\leq t\leq T}\norm{\nabla f(x_t)}^{1+1/p}_{1+1/p}\leq \frac{f(x_0)-f^*}{(T+1)\psi}
    \end{align*}
    where
    \begin{align*}
        \psi = \eta\left(\lambda^{1-p}-K\lambda-\frac{L}{1+p}\eta^p(\lambda^{1-p}+K\lambda)^{1+p}\right).
    \end{align*}
    \label{thm:Lp_smooth}
\end{theorem}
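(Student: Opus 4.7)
The plan is to follow the standard non-convex descent-lemma template, but using the $(L,p)$-H\"older smoothness inequality in place of the classical quadratic one and using Lemma~\ref{lem:Lp_bound} coordinate-wise to approximate the fractional gradient by a rescaled integer gradient. First, I would write the descent inequality
\begin{equation*}
f(x_{t+1}) \leq f(x_t) + \dotp{\nabla f(x_t)}{x_{t+1}-x_t} + \frac{L}{1+p}\norm{x_{t+1}-x_t}_{1+p}^{1+p}
\end{equation*}
coming directly from the definition of $(L,p)$-H\"older smoothness, and then substitute the update $x_{t+1} - x_t = -\eta\,\Cdp{c_t}f(x_t)$. The remaining work reduces to lower-bounding $\dotp{\nabla f(x_t)}{\Cdp{c_t}f(x_t)}$ and upper-bounding $\norm{\Cdp{c_t}f(x_t)}_{1+p}^{1+p}$, both in terms of $\norm{\nabla f(x_t)}_{1+1/p}^{1+1/p}$.

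Second, I would carry out both bounds coordinate-wise. Writing $g_i = \partial_i f(x_t)$ and $d_i$ for the $i$th component of $\Cdp{c_t}f(x_t)$, and choosing the sign of $x_t^{(i)} - c_t^{(i)}$ to match that of $g_i$ (a natural choice compatible with the magnitude prescribed in the theorem), Lemma~\ref{lem:Lp_bound} applied to $f_{i,x_t}$ gives
\begin{equation*}
\bigl|\lambda^{1-p}\sgn(g_i)|g_i|^{1/p} - d_i\bigr| \leq K\lambda|g_i|^{1/p}.
\end{equation*}
Multiplying by $g_i$, summing over $i$, and using $g_i\sgn(g_i)|g_i|^{1/p} = |g_i|^{1+1/p}$ yields
\begin{equation*}
\dotp{\nabla f(x_t)}{\Cdp{c_t}f(x_t)} \geq (\lambda^{1-p} - K\lambda)\,\norm{\nabla f(x_t)}_{1+1/p}^{1+1/p}.
\end{equation*}
For the norm, the triangle inequality gives $|d_i|\leq(\lambda^{1-p}+K\lambda)|g_i|^{1/p}$; raising to the $(1+p)$th power, summing, and using $(1+p)/p = 1+1/p$ gives
\begin{equation*}
\norm{\Cdp{c_t}f(x_t)}_{1+p}^{1+p} \leq (\lambda^{1-p}+K\lambda)^{1+p}\,\norm{\nabla f(x_t)}_{1+1/p}^{1+1/p}.
\end{equation*}

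Third, I would substitute these two bounds into the H\"older descent inequality and collect coefficients to obtain $f(x_{t+1}) - f(x_t) \leq -\psi\,\norm{\nabla f(x_t)}_{1+1/p}^{1+1/p}$ with $\psi$ exactly as stated. The condition $\lambda < \sqrt[p]{1/K}$ makes $\lambda^{1-p} - K\lambda$ strictly positive, and the stated bound on $\eta$ is precisely the one that keeps the $\eta^{1+p}$ contribution from overwhelming the $\eta$ contribution, i.e.\ $\psi > 0$. Telescoping from $t=0$ to $T$ and invoking $f(x_{T+1}) \geq f^*$ then gives $(T+1)\psi\,\min_{0\leq t\leq T}\norm{\nabla f(x_t)}_{1+1/p}^{1+1/p} \leq f(x_0) - f^*$, which is the claim.

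The main obstacle I anticipate is not any one algebraic step but lining up the exponents across the two coordinate-wise bounds. The definition of $\Cdp{c}$ carries a carefully tuned factor $|x-c|^{\alpha-p+1}$, and the terminal coupling uses $|g_i|^{1/p}$; only with these specific choices do both the first-order and $(1+p)$th-power terms collapse to the same norm $\norm{\nabla f(x_t)}_{1+1/p}^{1+1/p}$. Verifying this alignment, and tracking signs so the error from Lemma~\ref{lem:Lp_bound} subtracts from (rather than adds to) the main term in the inner product, is where the delicate bookkeeping lives.
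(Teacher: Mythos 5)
Your proposal is correct and follows essentially the same route as the paper's proof: the $(L,p)$-H\"older descent inequality, coordinate-wise application of Lemma~\ref{lem:Lp_bound} with the terminal coupling $|x_t^{(i)}-c_t^{(i)}|=\lambda|g_i|^{1/p}$ to collapse both the inner-product and $(1+p)$-norm terms to $\norm{\nabla f(x_t)}_{1+1/p}^{1+1/p}$, then positivity of $\psi$ and telescoping. Your only deviation, fixing the sign of $x_t^{(i)}-c_t^{(i)}$ to match $\sgn(g_i)$, is harmless but unnecessary, since the main term $g_i|x-c|^{1-p}$ in Lemma~\ref{lem:Lp_bound} depends only on the magnitude $|x-c|$, which is why the theorem constrains only $\bigl|x_t^{(i)}-c_t^{(i)}\bigr|$.
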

\begin{proof}
    Follows by applying Corollary~\ref{cor:smooth_bound} to bound the fractional derivative with an approximation in terms of the first derivative.  Then, the proof as aforementioned from 3.1 of \cite{Berger2020} can be followed with additional error terms from the approximation.  Careful choice of $c_t$ is required in order for the degrees of various terms to allow simplification.  See Appendix~\ref{Lp_Proof} for details.
\end{proof}
The key difference in the fractional gradient descent operator from previous sections is the exponent is now $\alpha-p+1$ instead of $\alpha$.  If we choose $\alpha = p$ (assuming $0<p<1$), the total order of $|x-c|$ terms becomes $0$ with a remaining $\sgn(x-c)$.  In this case, the fractional gradient descent operator is (up to proportionality and sign correction) just a fractional derivative.  Theorem~\ref{thm:Lp_smooth} tells us that convergence can be achieved in this case with constant learning rate with proper choice of $c_t$ which means that our fractional gradient descent step at any $t$ is directly proportional to the fractional derivative's value.  In a sense, this means that the fractional derivative is natural for optimizing $f$ when setting $\alpha=p$.
\section{Finding the Advantage of Fractional Gradient Descent}\label{advantage}
\subsection{Experiments}
We can see that there is an obvious gap between the motivation of doing better than standard gradient descent and the theoretical results.  While the theoretical results are crucial guarantees on worst case rates, they currently cannot explain how fractional gradient descent can do better.  Thus, this section is dedicated to experiments trying to explain this gap.
\begin{figure}[h]
    \centering
    \includegraphics[width=90mm]{./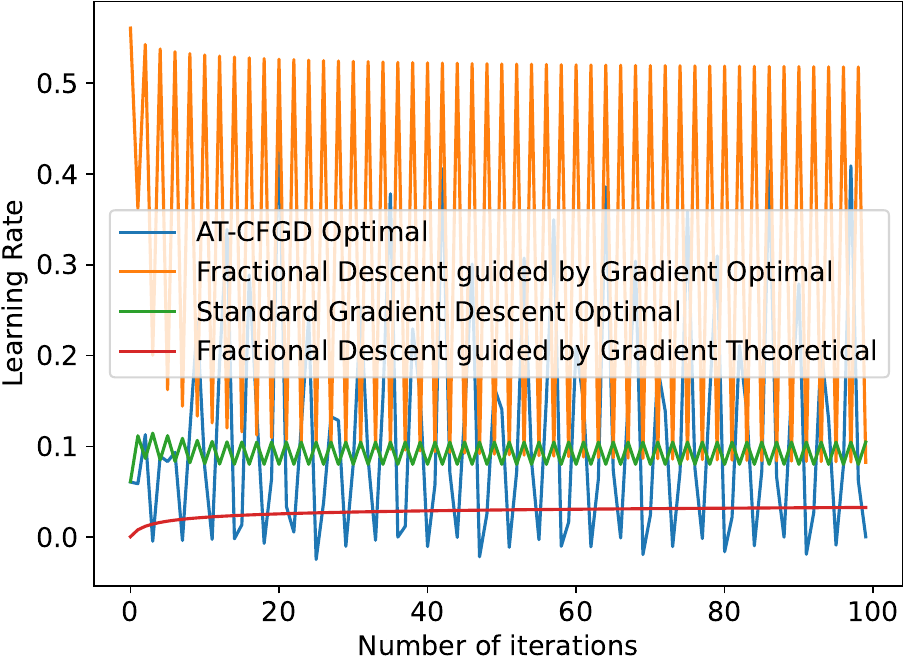}
    \caption{Learning rates used by different methods in Figure~\ref{fig:convergence} with the theoretical learning rate given by Corollary~\ref{cor:beta_unified_low} added.}
    \label{fig:learning}
\end{figure}

The first thing to note is that in the experiments recorded in Figure~\ref{fig:convergence}, the learning rate used is not that of Corollary~\ref{cor:beta_unified_low}, rather it is the optimal learning rate for quadratic functions from 3.3 in \cite{shin2021caputo} given by:
\begin{align*}
    \eta_t^* = \frac{\dotp{Ax_t+b}{d_t}}{\dotp{d_t}{Ad_t}}.
\end{align*}
for the function $\frac{1}{2}x^TAx+b^Tx+c$ with descent rule $x_{t+1} = x_t-\eta_td_t$.

We plot in Figure~\ref{fig:learning} exactly what the optimal learning rates used in Figure~\ref{fig:convergence} are and how they can compare to the theoretical learning rate given by Corollary~\ref{cor:beta_unified_low}.  The optimal learning rate for gradient descent and the theoretical learning rate from Corollary~\ref{cor:beta_unified_low} tend to be significantly smaller than the optimal learning rate for fractional methods.  It should be noted that the actual gradient norms may differ so the fairest comparison is between the optimal and theoretical learning rate of our method (Fractional Descent guided by Gradient).

From the equation in the discussion deriving Theorem~\ref{thm:beta_unified_low}: $(x_{t+1}-x^*)^2\leq [1-(2-\phi)\eta_t\mu(1-K_1\lambda_t-K_2|\lambda_t|)](x_{t}-x^*)^2$, we see that a larger learning rate directly improves the rate of convergence (assuming the larger learning rate is still valid with respect to prior assumptions).  Thus, it becomes apparent that in some cases, the theoretical learning rate being much lower than necessary explains why the theoretical convergence rate is no better than that of gradient descent.

\begin{figure}[ht]
    \centering
    \includegraphics[height=55mm]{./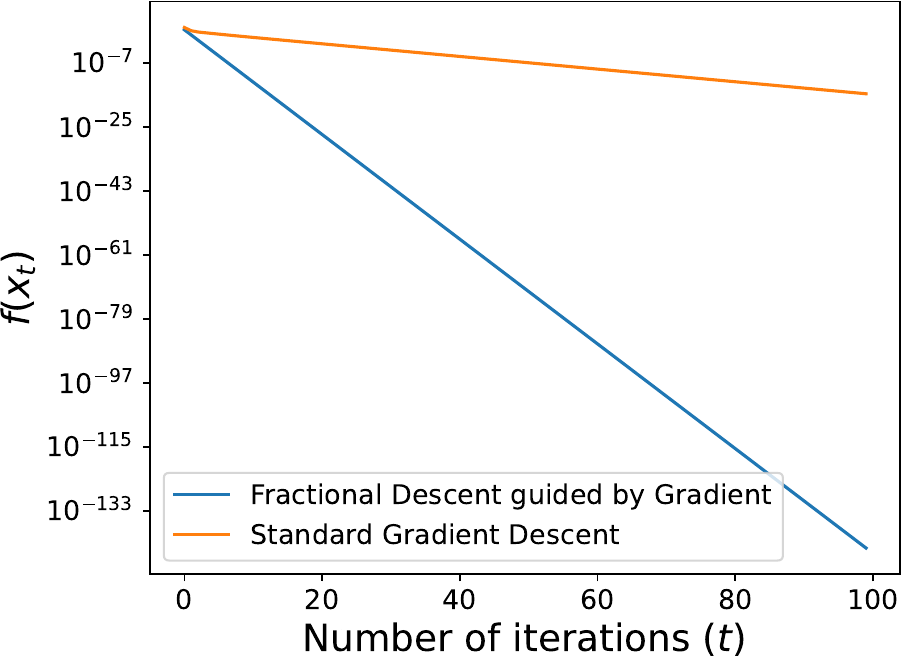}
    \hspace{5mm}
    \includegraphics[height=55mm]{./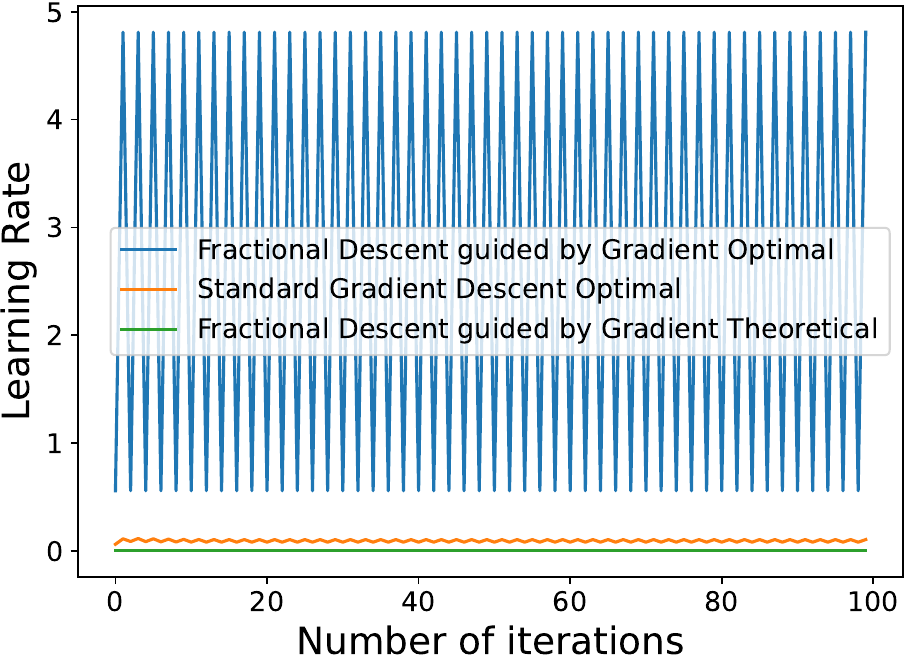}
    \caption{Comparison of fractional and standard gradient descent methods for $f(x) = x^T \diag([10,1,1,1,1]) x$ with $x_0 = (1,-10,5,8,-6)$.  Hyper-parameters as in Corollary~\ref{cor:beta_unified_low} are $\alpha = 1/2$, $\beta = -4/10$, $\lambda_t = -0.0675$}
    \label{fig:good}
\end{figure}
\begin{figure}[ht]
    \centering
    \includegraphics[height=55mm]{./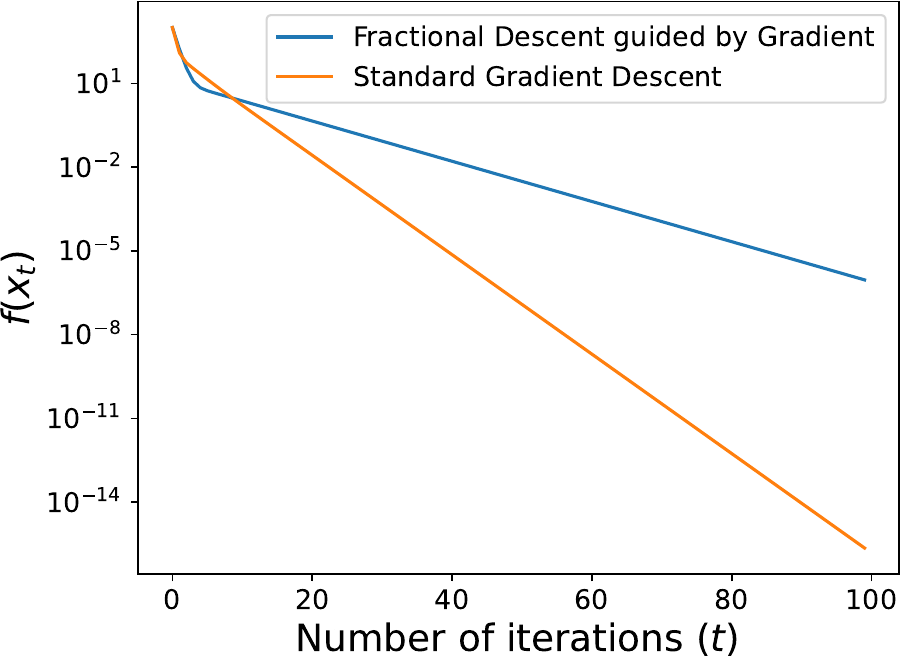}
    \hspace{5mm}
    \includegraphics[height=55mm]{./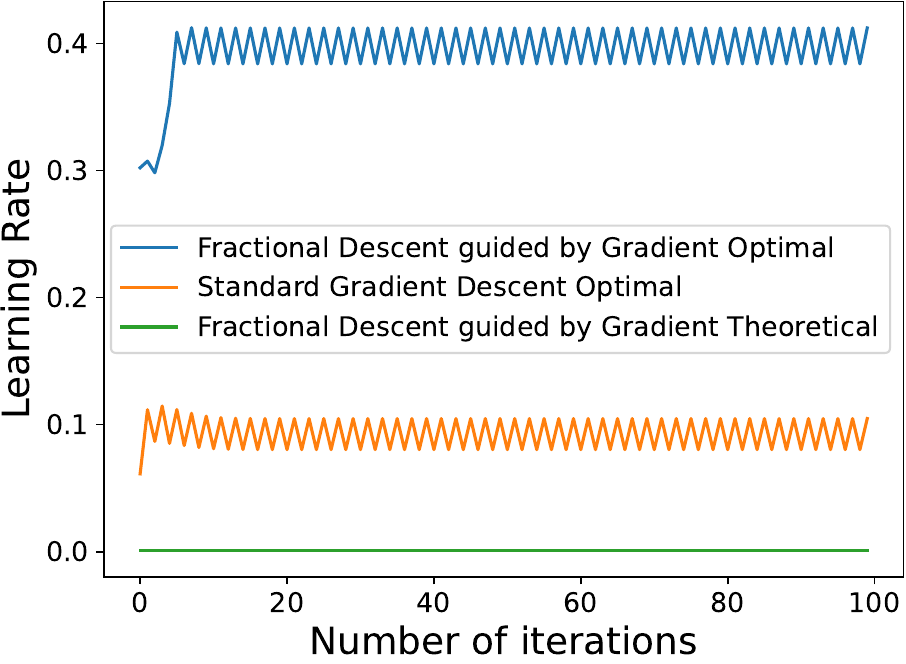}
    \caption{Comparison of fractional and standard gradient descent methods for $f(x) = x^T \diag([10,1,7,9,4]) x$ with $x_0 = (1,-10,5,8,-6)$.  Hyper-parameters as in Corollary~\ref{cor:beta_unified_low} are $\alpha = 1/2$, $\beta = -4/10$, $\lambda_t = -0.0675$}
    \label{fig:bad}
\end{figure}

One question that could then be raised is if the current data of the assumptions is enough to be able to prove a better bound that perhaps involves a speed-up over standard gradient descent.  The data with respect to a smooth and strongly convex function is two numbers - $L$, $\mu$.  Other than this, the function is a black box and we would expect any bound based on these assumptions to be equivalent for any functions with the same $L$, $\mu$ assuming same hyper-parameter choices. 

Looking at Figure~\ref{fig:good} and Figure~\ref{fig:bad}, we observe that despite the $L$, $\mu$ data being identical, the fractional gradient descent convergence rates are vastly different and in particular, there is no agreement over beating standard gradient descent.  This suggests that to prove whether this fractional gradient descent method is better/worse than gradient descent requires more data than just $\mu$-strong convexity and $L$-smoothness about the function.  Note that in order to construct this example, $\lambda_t$ had to be chosen very close to the constraint which meant that the theoretical learning rate and convergence rate are both extremely slow.  This means that it may be possible to prove advantage of fractional gradient descent with tighter constraints on $\lambda_t$.
\subsection{Quadratic Function Analysis}
\newcommand{\matr}[1]{\mathbf{#1}}
Motivated by these empirical findings, we present one final result applying the fractional gradient descent method from Section~\ref{descent_method} to quadratic functions.  This result is important in that it shows that with more information than just $L$, $\mu$, we are able to understand when the fractional gradient descent method will outperform gradient descent (at least in this limited case).
\begin{theorem}
    Suppose $f(x) = \frac{1}{2}x^T\matr{A}x +b^Tx+y_0$ for some positive definite $k\times k$ $\matr{A}$ with elements $a_{ij}$.  Let $\mu$ be its smallest eigenvalue and $L$ be its largest eigenvalue.  Choose $\beta\leq0$, $\alpha\in (0,1)$, and $\Delta\in \mathbb{R}$.  Choose $c$ from $x$ so that $x-c=-\lambda\nabla f(x)$ with $\lambda=\frac{\Delta}{(\beta-\gamma)L}$ where $\gamma=\frac{1-\alpha}{2-\alpha}$.  Then, we have the following:
    \begin{enumerate}
        \item Let $\matr{D} = \matr{I}-\frac{\Delta}{L}\diag(\matr{A})$, $\matr{A'} = \matr{D}\matr{A}$, and $b' = \matr{D}b$.  Then, $\Cd{{c}}f(x) = \matr{A'}x-b'$.  In particular, if $\matr{A'}$ is non-symmetric, this fractional gradient operator is not given by the gradient of any function.
        \item Suppose that $\forall y \in \mathbb{R}^k$, we have that $\mu'\norm{y}_2^2\leq y^T\matr{A'}y\leq L'\norm{y}^2_2$ and suppose that $\Delta$ is chosen so that all elements of $\matr{D}$ are positive.  Then, we have the following linear convergence rate for fractional gradient descent: 
        \begin{align*}
            \norm{x_{t}-x^*}^2_2\leq\left[1-\frac{\mu'}{L'}\right]^t\norm{x_0-x^*}^2_2.
        \end{align*}  
        In particular, if the condition number of $\matr{A'}$, $\kappa(\matr{A'})=\frac{L'}{\mu'}$, is smaller than $\kappa(\matr{A})=\frac{L}{\mu}$, then fractional gradient descent achieves a faster convergence rate (in number of iterations).
    \end{enumerate}
    \label{thm:quadratic}
\end{theorem}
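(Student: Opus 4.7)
My plan is to prove Part 1 by direct evaluation of the Caputo derivatives on a quadratic and Part 2 by conjugating the resulting non-symmetric linear iteration to a symmetric positive definite one via a similarity transform.

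For Part 1, the operator $\Cd{c}f(x)$ is defined coordinate-wise, so it suffices to evaluate it on a one-variable quadratic $g(y) = \tfrac{1}{2}ay^2 + by + \mathrm{const}$ and then reassemble. Theorem~\ref{thm:relation} collapses in closed form here because $\zeta_x(t) = \tfrac{a}{2}(t-x)^2$; the remaining integral is $\int_c^x |x-t|^{1-\alpha}\,dt = \sgn(x-c)|x-c|^{2-\alpha}/(2-\alpha)$, and after using $\Gamma(2-\alpha)=(1-\alpha)\Gamma(1-\alpha)$ one obtains $\CD{c}g(x) = \frac{g'(x)(x-c)}{\Gamma(2-\alpha)|x-c|^\alpha} - \frac{\gamma a|x-c|^{2-\alpha}}{\Gamma(2-\alpha)}$. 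Since $g''\equiv a$ is constant, a direct integration also gives $\CD[{1+\alpha}]{c}g(x) = \frac{a|x-c|^{1-\alpha}}{\Gamma(2-\alpha)}$. Plugging both into the definition of $\Cd{c}g$ causes every fractional weight to cancel, producing the clean identity $\Cd{c}g(x) = g'(x) + (\beta-\gamma)a(x-c)$.

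Applying this coordinate-wise, the $i$-th component of $\Cd{c}f(x)$ equals $(\nabla f(x))_i + (\beta-\gamma)A_{ii}(x^{(i)} - c^{(i)})$. Substituting $x^{(i)} - c^{(i)} = -\lambda (\nabla f(x))_i$ with $\lambda = \Delta/((\beta-\gamma)L)$ cancels the $(\beta-\gamma)$ factors, leaving $D_{ii}(\nabla f(x))_i$, so $\Cd{c}f(x) = D\nabla f(x) = A'x + Db$ (matching the stated formula up to the sign convention for $b'$). Non-symmetry of $A' = DA$ is then immediate: since $A$ is symmetric and $D$ diagonal, $(A')^T = AD$, and $DA = AD$ generically fails unless $D$ commutes with $A$. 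If $A'$ is non-symmetric, no function can have $\Cd{c}f$ as its gradient, because the Jacobian of any gradient field must be symmetric by equality of mixed partials.

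For Part 2, setting $x^* = -A^{-1}b$ gives $A'x^* + Db = D(Ax^*+b) = 0$, so the iteration linearizes to $x_{t+1} - x^* = (I - \eta A')(x_t - x^*)$. The plan is to exploit the similarity $D^{-1/2} A' D^{1/2} = D^{1/2}AD^{1/2} =: \tilde{A}$, which is SPD because $D$ has positive entries; hence $A'$ has real positive eigenvalues coinciding with $\mathrm{spec}(\tilde{A})$ and real eigenvectors of the form $D^{1/2}w$. Plugging such an eigenvector into the hypothesis $\mu'\norm{y}^2 \leq y^T A' y \leq L'\norm{y}^2$ then pins $\mathrm{spec}(A') \subset [\mu',L']$. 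Equivalently, working in the $\norm{\cdot}_A$ norm where $A^{1/2}(I-\eta A')A^{-1/2} = I - \eta A^{1/2} D A^{1/2}$ is symmetric with the same spectrum, the choice $\eta = 1/L'$ yields a contraction factor of $1 - \mu'/L'$ per step, which iterates to the stated rate. The comparison $\kappa(A') < \kappa(A)$ is then a direct corollary, viewing $D$ as a diagonal preconditioner.

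The main obstacle is Part 2: a naive singular-value bound on $\norm{I-\eta A'}_2$ is loose for non-symmetric $A'$ and does not recover the claimed rate. The similarity transformation is essential, and two points deserve care: (i) verifying that the quadratic-form bound on $y^T A' y$ transfers to a true spectral bound on $\tilde{A}$ via the real eigenvector observation above, and (ii) transferring the contraction from the conjugated norm back to $\norm{\cdot}_2$ on $x_t - x^*$ without picking up spurious conditioning factors. Once these are handled, Part 1 feeds directly into Part 2 and the condition-number comparison concludes the proof.
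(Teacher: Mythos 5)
Your Part 1 is correct and essentially coincides with the paper's proof: you re-derive from Theorem~\ref{thm:relation} (plus a direct evaluation of the order-$(1+\alpha)$ derivative of a function with constant second derivative) the scalar identity that the operator applied to $g(y)=\tfrac{a}{2}y^2+by+\mathrm{const}$ equals $g'(x)+(\beta-\gamma)a(x-c)$, which the paper instead imports from Lemma A.1 of \cite{shin2021caputo} and assembles by linearity. Your closed forms check out, the cancellation of $(\beta-\gamma)$ against $\lambda=\Delta/((\beta-\gamma)L)$ is exactly the paper's computation, and you are right that the sign on $b'$ in the statement is a typo (the paper's own proof also arrives at $\mathbf{A}'x+\mathbf{D}b$). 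The mixed-partials justification of the non-gradient claim matches the paper's assertion.

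Part 2 contains a genuine gap, and it is precisely the one you flag in your closing paragraph without resolving. Your similarity step is fine as far as it goes: $\mathbf{D}^{-1/2}\mathbf{A}'\mathbf{D}^{1/2}=\mathbf{D}^{1/2}\mathbf{A}\mathbf{D}^{1/2}$ is symmetric positive definite, its eigenvectors give real eigenvectors $\mathbf{D}^{1/2}w$ of $\mathbf{A}'$, and feeding these into the quadratic-form hypothesis does pin $\mathrm{spec}(\mathbf{A}')\subset[\mu',L']$. But from there you control only the spectral radius of $\mathbf{I}-\eta\mathbf{A}'$, equivalently its norm in the $\mathbf{A}^{1/2}$- (or $\mathbf{D}^{-1/2}$-)weighted inner product; transferring to the Euclidean norm inevitably costs $\norm{(\mathbf{I}-\eta\mathbf{A}')^t}_2\leq\sqrt{\kappa(\mathbf{A})}\,(1-\mu'/L')^t$, and for a non-normal matrix that conditioning factor cannot be removed, because a spectral containment alone never implies the per-step Euclidean inequality $\norm{x_{t+1}-x^*}_2^2\leq(1-\mu'/L')\norm{x_t-x^*}_2^2$ that the theorem asserts. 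Even granting the transfer, you would land on $\kappa(\mathbf{A})(1-\mu'/L')^{2t}$, which is not the stated bound. The information your eigenvector step discards is exactly what is needed: the hypothesis $\mu'\norm{y}_2^2\leq y^T\mathbf{A}'y\leq L'\norm{y}_2^2$ constrains the symmetric part of $\mathbf{A}'$, which is strictly stronger than the spectral containment you extract. The paper uses it directly, with no diagonalization: writing $z_t=\mathbf{A}'x_t+b'=\mathbf{A}'(x_t-x^*)$, the three-point identity gives
\begin{align*}
\norm{x_{t+1}-x^*}_2^2=\eta^2\norm{z_t}_2^2-2\eta\,(x_t-x^*)^T\mathbf{A}'(x_t-x^*)+\norm{x_t-x^*}_2^2,
\end{align*}
one half of the cross term is bounded using $y^T\mathbf{A}'y\geq\mu'\norm{y}_2^2$, and the other half is rewritten as $-\eta\,z_t^T(\mathbf{A}'^{-1})^Tz_t\leq-\tfrac{\eta}{L'}\norm{z_t}_2^2$, which at $\eta=1/L'$ cancels $\eta^2\norm{z_t}_2^2$ exactly and leaves the claimed per-step factor $1-\mu'/L'$ in the Euclidean norm. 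To repair your write-up, replace the conjugation by this direct splitting (note that the paper's $\mathbf{A}'^{-1}$ maneuver amounts to reading the hypothesis as $\norm{\mathbf{A}'y}_2^2\leq L'\,y^T\mathbf{A}'y$, i.e., it too leans on the quadratic form rather than on the spectrum); the similarity transform is still worth keeping, but only as a structural remark that $\mathbf{A}'$ has real positive eigenvalues.
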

\begin{proof}
    The first point follows from a straightforward calculation of the fractional gradient descent operator on quadratic functions.  The second point follows from a proof almost identical to that of gradient descent on quadratic functions.  The linear rate of convergence for gradient descent is $1-\frac{\mu}{L}$ so if $\kappa(\matr{A'})<\kappa(\matr{A})$, the convergence will be faster.  See Appendix~\ref{quadratic_proof} for more details.
\end{proof}
For one simple application of this theorem, consider the case where $
\matr{A}$ is diagonal.  Then $\matr{A'}$ is also diagonal with elements of the form $d_i = a_{ii}(1-a_{ii}\frac{\Delta}{L})$ on the diagonal.  This case corresponds to all of the above experiments.  We note that $\mu'$ and $L'$ in this case are simply the smallest and largest eigenvalues of $\matr{A'}$ since $\matr{A'}$ is positive definite.  Some $\mu=a_{ii}$ will be the smallest eigenvalue of $\matr{A}$ and some $L=a_{jj}$ will be the largest eigenvalue.  In $\matr{A'}$, these elements become $\mu(1-\mu\frac{\Delta}{L})$, $L(1-\Delta)$ with $0<\Delta<1$.  We then have the following:
\begin{align*}
    \frac{L(1-\Delta)}{\mu(1-\mu\frac{\Delta}{L})} = \frac{L}{\mu}\frac{1-\Delta}{1-\mu\frac{\Delta}{L}}=\frac{L}{\mu}\frac{L-L\Delta}{L-\mu\Delta}\leq\frac{L}{\mu} = \kappa(\matr{A}).
\end{align*}
This appears to contradict Figure~\ref{fig:bad} since this seems to imply that fractional gradient descent should always outperform gradient descent on these simple quadratic functions.  However, the key is that the LHS is not necessarily equal to $\kappa(\matr{A'})$. In particular, $d_i$ can also amplify $\mu<a_{ii}<L$ such that these components become the maximum eigenvalues of $\matr{A'}$ and can shrink components to become minimum eigenvalues.  To avoid this, observe that if $\Delta\leq \frac{1}{2}$, the maximum of the $d_i$ will always be where $a_{ii} = L$ and the minimum will always be where $a_{ii} = \mu$.  Thus, if $\Delta\leq \frac{1}{2}$, for these simple functions, fractional gradient descent always does at least as good as gradient descent.  Figure~\ref{fig:bad} corresponds to the case of $\Delta = 0.99$ which does not satisfy this condition.

One more point to note is that in the limit as $\alpha\to1$, the theorem still holds as long as $\beta\neq0$.  In particular, as long as $\beta-\gamma\neq0$ the theorem's convergence rate has no dependence on $\beta$ and $\alpha$ since $\lambda$ inversely scaling on $\beta-\gamma$ is effectively cancelling it out.  This means that in some sense if we set $\beta = 0$, we are getting the benefits of a second order method where $\alpha = 1$, $\beta \neq 0$ (see Section~\ref{descent_method} for details on this limit) without actually using more than the first derivative.

For more complicated $\matr{A}$, applying this theorem requires comparing the condition number of $\matr{A'}$ against that of $\matr{A}$ with no easy relationship like in the previous simple case. 
 While this theorem is limited to quadratic functions, it is promising in providing a theoretical explanation of how fractional gradient descent can outperform gradient descent - a question that prior work left open.

\section{Future Directions}
Going off of the preceding discussion, one important future direction is to search for additional assumptions to classify when this fractional gradient descent method will outperform gradient descent on general classes of functions.  It is also important to develop lower bounds to prove when general fractional gradient descent methods are capable of achieving superior performance to gradient descent and to verify whether the convergence rates developed in the paper are tight.

With respect to the tightness of the bounds, one potential area for further improvement is to better handle the second term of $\Cd{{c}}f(x)$ involving a fractional derivative of order between 1 and 2 with coefficient scaling with $\beta$.  Currently Lemma~\ref{lem:2nd_bounds} handles bounding this term, however, it is rather weak in that it loses any information that would make this term useful for convergence.  This is reflected in how sending $\beta\to0$ tends to improve the theoretical rate.  Even in Theorem~\ref{thm:quadratic}, $\beta$ could be set to $0$ without any loss in the results.  If we consider Theorem~\ref{thm:limit}, we have for $1<\alpha<2$ that $\lim_{\alpha\to 1} \CD{c}f(x) = \sgn(x-c)(f'(x)-f'(c))$.  If $c$ is chosen properly, this could potentially work similarly to acceleration algorithms in gradient descent.  More work is needed to understand this term outside the limit case since attempting to use integration by parts to express it in terms of the gradient fails.  It is important in future work to understand if this term can be useful for proving better rates. 

Another interesting direction would be to investigate the effects of changing $c_t$.  Both \cite{shin2021caputo} and this paper use different methods of choosing $c_t$ and it is not clear which is better since it is difficult to directly compare without more theoretical results.  In addition, future work could look at applying similar strategies of relating fractional and integer derivatives to different underlying fractional derivatives such as the Reimann-Liouville derivative.

One important future direction is to show convergence results for $p\neq 1$ for more settings.  In this paper, we only discuss $p\neq 1$ for smooth and non-convex functions while leaving the other two settings for future work.  As aforementioned the difficulties of this setting stem from the norm not being induced by an inner product on $L^{1+p}$ spaces making it difficult to expand terms of the form $\norm{x-y}_{1+p}^{1+p}$ or even just $|x-y|^{1+p}$ for some arbitrary $x,y$.

Another line of thought is to bypass the need for inequalities relating fractional and integer derivatives by using convexity and smoothness definitions that only involve fractional derivatives.  This would be ideal since the current proof methodology loses a lot of information about the fractional derivative in connecting it to the gradient causing the convergence rates to be bounded by those of gradient descent.  One direction that may be promising is using fractional Taylor series like in \cite{taylor} to construct these definitions.  However, these series for Caputo Derivatives are somewhat limited in how they need to be centered at the terminal point of the derivative.  If we ignore the terminal point and try to use simple assumptions like Lipschitz continuity of the fractional derivative, bad behavior around the terminal point (where the fractional derivative rapidly tends to 0) prevents it from being applicable even to simple test functions like $|x|^{1+\alpha}$.  One potential solution would be to set the terminal point depending on $x$ only and not the iteration number and consider Lipschitz continuity properties after integrating this dependency.  Following this line of thought, we could take the conditions from point 2 of Theorem~\ref{thm:quadratic} and replace $\matr{A'}$ with the total derivative of the fractional gradient descent operator.  This could serve as a natural generalization of smoothness and strong convexity.

In conclusion, this paper proves convergence results for fractional gradient descent in smooth and strongly convex, smooth and convex, smooth and non-convex, and quadratic settings.  Future work is needed in extending these results to other classes of functions and other methods to show a general guaranteed benefit over gradient descent.
\subsubsection*{Acknowledgments}
I thank Prof. Quanquan Gu for his constructive comments on this manuscript.
\bibliographystyle{tmlr} 
\bibliography{main} 
\appendix
\section{Missing Proofs in Section~\ref{Relation} Relating Fractional Derivative and Integer Derivative}
\subsection{Proof of Theorem~\ref{thm:limit}}\label{Limit_Proof}
\begin{proof}
    \begin{align*}
        \CD{{c}}f(x) &= \frac{\sgn(x-c)^{n-1}}{\Gamma(n-\alpha)}\int_c^x \frac{f^n(t)}{|x-t|^{\alpha-n+1}}dt\\
        &=\frac{\sgn(x-c)^{n-1}}{\Gamma(n-\alpha)}\left[\left.-\frac{f^n(t)}{n-\alpha}|x-t|^{n-\alpha}\sgn(x-c)\right|_{t=c}^x+\int_c^x \frac{f^{n+1}(t)|x-t|^{n-\alpha}}{n-\alpha}\sgn(x-c)dt\right]\\
        &=\frac{\sgn(x-c)^{n}}{\Gamma(n-\alpha+1)}\left[f^n(c)|x-c|^{n-\alpha}+\int_c^x f^{n+1}(t)|x-t|^{n-\alpha}dt\right].
    \end{align*}
    As $\alpha\to n$, this simplifies to:
    \begin{align*}
        \CD{{c}}f(x) = \sgn(x-c)^n(f^n(c)+f^n(x)-f^n(c) = \sgn(x-c)^nf^n(x).
    \end{align*}
    As $\alpha\to n-1$, directly from the definition,
    \begin{align*}
        \CD{{c}}f(x) = \sgn(x-c)^{n-1}\int_c^xf^n(t)dt = \sgn(x-c)^{n-1}(f^{n-1}(x)-f^{n-1}(c)).
    \end{align*}
\end{proof}
\subsection{Proof of Theorem~\ref{thm:relation}}\label{Relation_Proof}
\begin{proof}
    First, note that for $\alpha \in (0,1]$, $\CD{c}x = \frac{x-c}{\Gamma(2-\alpha)|x-c|^{\alpha}}$.  One interesting thing here is that this fractional derivative can be both positive and negative unlike the first derivative of a line.  Also note that $d\zeta_x(t) = (f'(t)-f'(x))dt$.  Therefore, we begin with the following expression:
    \begin{align*}
        \CD{c}f(x) - f'(x)\CD{c}(x) &= \frac{1}{\Gamma(1-\alpha)}\int_c^x |x-t|^{-\alpha} (f'(t) - f'(x)) dt\\
        &= \frac{1}{\Gamma(1-\alpha)}\int_c^x |x-t|^{-\alpha} d\zeta_x(t)\\
        &= \left.\frac{|x-t|^{-\alpha}\zeta_x(t)}{\Gamma(1-\alpha)}
        \right|_{t=c}^x-\frac{\alpha}{\Gamma(1-\alpha)}\int_c^x |x-t|^{-\alpha-1}\sgn(x-t) \zeta_x(t)dt\\
        &= \left.\frac{\zeta_x(t)}{\Gamma(1-\alpha)|x-t|^{\alpha}}
        \right|_{t=c}^x-\frac{\alpha\sgn(x-c)}{\Gamma(1-\alpha)}\int_c^x \frac{\zeta_x(t)}{|x-t|^{\alpha+1}}dt.
    \end{align*}
    It remains to show that in the first term vanishes as $t\to x$ which is done using L’Hopital’s Rule:
    \begin{align*}
        \lim_{t\to x}\frac{\zeta_x(t)}{\Gamma(1-\alpha)|x-t|^{\alpha}}=\lim_{t\to x}\frac{f'(t)-f'(x)}{\alpha \Gamma(1-\alpha)|x-t|^{\alpha-1}\sgn(x-t)} = 0.
    \end{align*}
    The last equality is since $\alpha \in (0,1]$ so $\alpha - 1 \leq 0$.
\end{proof}
    \subsection{Proof of Corollary~\ref{cor:smooth_bound}}\label{Smooth_Bound_Proof}
    \begin{proof}
    Note that $(L, p)$-H\"older smooth implies that $\zeta_x(t)\leq \frac{L}{1+p}|x-t|^{1+p}$.  Also $1+p-\alpha>0$ since $p>0,\alpha \in (0,1]$.  Thus,
    \begin{align*}
        \frac{f'(x)(x-c)}{\Gamma(2-\alpha)|x-c|^\alpha}-\CD{c}f(x) &= \frac{\zeta_x(c)}{\Gamma(1-\alpha)|x-c|^\alpha} + \frac{\alpha\sgn(x-c)}{\Gamma(1-\alpha)}\int_c^x \frac{\zeta_x(t)}{|x-t|^{\alpha+1}}dt\\
        &\leq \frac{L|x-c|^{1+p-\alpha}}{(1+p)\Gamma(1-\alpha)} + \frac{\alpha L\sgn(x-c)}{(1+p)\Gamma(1-\alpha)}\int_c^x |x-t|^{p-\alpha}dt\\
        &\leq \frac{L|x-c|^{1+p-\alpha}}{(1+p)\Gamma(1-\alpha)} + \frac{\alpha L\sgn(x-c)}{(1+p)\Gamma(1-\alpha)}\sgn(x-c)\frac{|x-c|^{1+p-\alpha}}{1+p-\alpha}\\
        &=\frac{L}{(1+p)\Gamma(1-\alpha)}|x-c|^{1+p-\alpha}(1+\frac{\alpha}{1+p-\alpha})\\
        &=\frac{L}{\Gamma(1-\alpha)(1+p-\alpha)}|x-c|^{1+p-\alpha}.
    \end{align*}
    The other direction of the inequality follows by the same logic using instead $\zeta_x(t)\geq \frac{-L}{1+p}|x-t|^{1+p}$ and using $\geq$ instead of $\leq$.
\end{proof}
\section{Missing Proofs in Section~\ref{Smooth-Strongly-Convex-Optimization} Smooth and Strongly Convex Optimization}
\subsection{Proof of Lemma~\ref{lem:2nd_bounds}}\label{Proof_2nd_bounds}
\begin{proof}
    $L$-smooth implies that $f''(x)\leq L$.  Since $\alpha \in (1,2]$,
    \begin{align*}
        \CD{c}f(x)&= \frac{\sgn(x-c)}{\Gamma(2-\alpha)}\int_c^x \frac{f''(t)}{|x-t|^{\alpha-1}}dt\\
        &\leq \frac{\sgn(x-c)}{\Gamma(2-\alpha)}\int_c^x \frac{L}{|x-t|^{\alpha-1}}dt\\
        &= \frac{\sgn(x-c)}{\Gamma(2-\alpha)}\frac{|x-c|^{2-\alpha}}{2-\alpha}\sgn(x-c)L\\
        &= \frac{L}{\Gamma(3-\alpha)}|x-c|^{2-\alpha}.
    \end{align*}
    The bound holds since the integral is in the positive direction due to $\sgn(x-c)$.  The proof for $\mu$-strongly convex is identical except using $f''(x)\geq \mu$.
\end{proof}
\subsection{Proof of Theorem~\ref{thm:pos_bound}}\label{Pos_Proof}
\begin{proof}
    We begin by upper bounding $\Cd{{c}}f(x)$.  Note that since both terms in it have an $(x-c)$ in the denominator, $\sgn(x-c)$ determines which inequality must be used.  Let $R$ denote $ReLU$.  Then,
    \begin{align*}
        \Cd{{c}}f(x)&\leq f'(x)-\mu\frac{1-\alpha}{2-\alpha}R(x-c)+L\frac{1-\alpha}{2-\alpha}R(c-x) + L\beta R(x-c) -\mu\beta R(c-x)\\
        &=f'(x)-\mu\gamma R(x-c)+L\gamma R(c-x) + L\beta R(x-c) -\mu\beta R(c-x)\\
        &=f'(x)+(L\beta-\mu\gamma)R(x-c)+(L\gamma-\mu\beta)R(c-x).
    \end{align*}
    The first 3 terms on the RHS come from bounding the first term of $\Cd{{c}}f(x)$ and the latter two terms come from bounding the secon term of $\Cd{{c}}f(x)$.  Similarly, we find that
    \begin{align*}
        \Cd{{c}}f(x)\geq f'(x)+(\mu\beta-L\gamma)R(x-c)+(\mu\gamma-L\beta)R(c-x).
    \end{align*}
    Observe that 
    \begin{align*}
        &\frac{(L\beta-\mu\gamma) - (\mu\beta-L\gamma)}{2} = \frac{(L-\mu)\beta + (L-\mu)\gamma}{2} = \frac{(L-\mu)}{2}(\beta+\gamma),\\
        &\frac{(L\gamma-\mu\beta) - (\mu\gamma-L\beta)}{2} = \frac{(L-\mu)}{2}(\beta+\gamma),\\
        &\frac{(L\beta-\mu\gamma) + (\mu\beta-L\gamma)}{2} = \frac{(L+\mu)\beta - (L+\mu)\gamma}{2} = \frac{(L+\mu)}{2}(\beta-\gamma),\\
        &\frac{(L\gamma-\mu\beta) + (\mu\gamma-L\beta)}{2} = -\frac{(L+\mu)}{2}(\beta-\gamma).
    \end{align*}
    Using these equations gives
    \begin{align*}
        \Cd{{c}}f(x)&\leq f'(x)+\frac{(L-\mu)}{2}(\beta+\gamma)R(x-c)+\frac{(L+\mu)}{2}(\beta-\gamma)R(x-c)\\
        &\qquad+\frac{(L-\mu)}{2}(\beta+\gamma)R(c-x)-\frac{(L+\mu)}{2}(\beta-\gamma)R(c-x)\\
        &=f'(x)+\frac{(L+\mu)}{2}(\beta-\gamma)(x-c)+\frac{(L-\mu)}{2}(\beta+\gamma)|x-c|\\
        &=f'(x)+K_1(x-c)+K_2|x-c|.
    \end{align*}
    Similarly, the lower bound is
    \begin{align*}
        \Cd{{c}}f(x)&\geq f'(x)+K_1(x-c)-K_2|x-c|.
    \end{align*}
    Putting both of these bounds together gives the desired result:
    \begin{align*}
        -K_2|x-c|\leq \Cd{{c}}f(x) - f'(x)-K_1(x-c)\leq K_2|x-c|.
    \end{align*}
\end{proof}
\subsection{Proof of Theorem~\ref{thm:neg_bound}}\label{Neg_Proof}
\begin{proof}
    The proof begins similarly as in Theorem~\ref{thm:pos_bound}, except $\beta$ determines the sign as well.
    \begin{align*}
        \Cd{{c}}f(x)&\leq f'(x)-\mu\frac{1-\alpha}{2-\alpha}R(x-c)+L\frac{1-\alpha}{2-\alpha}R(c-x) + L R(\beta(x-c)) -\mu R(\beta(c-x))\\
        &=f'(x)-\mu\frac{1-\alpha}{2-\alpha}R(x-c)+L\frac{1-\alpha}{2-\alpha}R(c-x) - L\beta R(c-x) + \mu\beta R(x-c)\\
        &=f'(x)+(\mu\gab)R(x-c)-(L\gab)R(c-x)\\
        &=f'(x)+K_1(x-c)+K_2|x-c|.
    \end{align*}
    Similarly, we find that
    \begin{align*}
        \Cd{{c}}f(x)&\geq f'(x)+(L\gab)R(x-c)-(\mu\gab)R(c-x)\\
        &=f'(x)+K_1(x-c)-K_2|x-c|.
    \end{align*}
\end{proof}
\subsection{Proof of Theorem~\ref{thm:beta_unified_low}}\label{Low_Unified_Proof}
\begin{proof}
    We start with 3 point identity.  Note this is where the case $p\neq1$ breaks down since the $L^{1+p}$ norm is not induced by an inner product if $p\neq1$.
    \begin{align*}
        (x_{t+1}-x^*)^2&=(x_{t+1}-x_t)^2+2(x_{t+1}-x_t)(x_t-x^*)+(x_t-x^*)^2\\
        &=(\eta_t\Cd{{c_t}}f(x_t))^2-2\eta_t\Cd{{c_t}}f(x_t)(x_t-x^*)+(x_t-x^*)^2.
    \end{align*}
    We begin by bounding the first term:
    \begin{align*}
        (\Cd{{c_t}}f(x_t))^2 &= ((\Cd{{c_t}}f(x_t)-f'(x_t)-K_1(x_t-c_t))+(f'(x_t)+K_1(x_t-c_t)))^2\\
         &\leq K_2^2(x_t-c_t)^2 + 2K_2|x_t-c_t||f'(x_t)+K_1(x_t-c_t)|\\
         &\qquad+(f'(x_t)+K_1(x_t-c_t))^2.
    \end{align*}
    One observation here is that we would like everything to be in terms of $f'(x_t)^2$ to make canceling more convenient later.  For this purpose, choose $x_t-c_t = -\lambda_t f'(x_t)$.  Thus, we get
    \begin{align*}
        (\Cd{{c_t}}f(x_t))^2 &\leq K_2^2(\lambda_t)^2(f'(x_t))^2 + 2K_2|\lambda_t||1-K_1\lambda_t|(f'(x_t))^2+(1-K_1\lambda_t)^2(f'(x_t))^2\\
        &=(K_2|\lambda_t| + |1-\lambda_tK_1|)^2(f'(x_t))^2.
    \end{align*}
    Now, choose some $\phi \in (0,2)$.  We now bound the second term as follows (note we assume here $\eta_t\geq 0$ since unlike the past section this makes sense):
    \begin{align*}
        -2\eta_t\Cd{{c_t}}f(x_t)(x_t-x^*)&\leq 2\eta_tK_2|x_t-c_t||x_t-x^*|-2\eta_tf'(x_t)(x_t-x^*)\\
        &\qquad-2\eta_tK_1(x_t-c_t)(x_t-x^*)\\
        &= 2\eta_tK_2|\lambda_t||f'(x_t)||x_t-x^*|-2\eta_tf'(x_t)(1-\lambda_tK_1)(x_t-x^*)\\
        &= 2\eta_tK_2|\lambda_t|(f'(x_t))(x_t-x^*)-2\eta_tf'(x_t)(1-\lambda_tK_1)(x_t-x^*)\\
        &= -2\eta_tf'(x_t)(x_t-x^*)(1-K_1\lambda_t-K_2|\lambda_t|)\\
        &\leq -\frac{\phi\eta_t}{L}(1-K_1\lambda_t-K_2|\lambda_t|)f'(x_t)^2\\
        &\qquad-(2-\phi)\eta_t\mu(1-K_1\lambda_t-K_2|\lambda_t|)(x_t-x^*)^2.
    \end{align*}
    We can drop the absolute value signs due to the convexity assumption (note this works specifically for single dimension $f$).  We need both terms of the RHS to be negative for proving convergence which puts a condition on $\lambda_t$:
    \begin{align*}
        1-K_1\lambda_t-K_2|\lambda_t| > 0.
    \end{align*}
    This condition gives that $1-K_1\lambda_t>0$.  Putting everything together gives:
    \begin{align*}
        (x_{t+1}-x^*)^2&\leq \eta_t^2((K_2|\lambda_t| + 1-\lambda_tK_1)^2(f'(x_t))^2 -\frac{\phi\eta_t}{L}(1-K_1\lambda_t-K_2|\lambda_t|)f'(x_t)^2\\
        &\qquad-(2-\phi)\eta_t\mu(1-K_1\lambda_t-K_2|\lambda_t|)(x_t-x^*)^2+(x_t-x^*)^2.
    \end{align*}
    Now, we can figure out the learning rate since we want the first term on the RHS to be dominated by the second term.
    \begin{align*}
        &\eta_t^2(K_2|\lambda_t| + 1-\lambda_tK_1)^2 \leq \frac{\phi\eta_t}{L}(1-K_1\lambda_t-K_2|\lambda_t|)\\
        \implies &\eta_t = \frac{(1-K_1\lambda_t-K_2|\lambda_t|)\phi}{(1-K_1\lambda_t+K_2|\lambda_t|)^2L}.
    \end{align*}
    Finally, this leads to a convergence rate as follows:
    \begin{align*}
        (x_{t+1}-x^*)^2&\leq \left[1-(2-\phi)\eta_t\mu(1-K_1\lambda_t-K_2|\lambda_t|)\right](x_{0}-x^*)^2\\
        &= \left[1-(2-\phi)\phi\frac{\mu}{L}\left(\frac{1-K_1\lambda_t-K_2|\lambda_t|}{1-K_1\lambda_t+K_2|\lambda_t|}\right)^2\right](x_{t}-x^*)^2.
    \end{align*}
    This is a linear rate of convergence since $\epsilon$ guarantees that this equation is a contraction as $t\to\infty$.  The rate is at best as good as gradient descent since $K_2|\lambda_t|\geq0$.  
\end{proof}
\subsection{Proof of Theorem~\ref{thm:beta_unified}}\label{High_Unified_Proof}
\begin{proof}
    We follow the discussion deriving Theorem~\ref{thm:beta_unified_low}.  We start with 3 point identity:
    \begin{align*}
        \norm{x_{t+1}-x^*}_2^2&=\norm{x_{t+1}-x_t}_2^2+2\dotp{x_{t+1}-x_t}{x_t-x^*}+\norm{x_t-x^*}_2^2\\
        &=\eta_t^2\norm{\Cd{{c_t}}f(x_t)}_2^2-2\eta_t\dotp{\Cd{{c_t}}f(x_t)}{x_t-x^*}+\norm{x_t-x^*}_2^2.
    \end{align*}
    For bounding the first term, this can be done coordinate wise.
    \begin{align*}
        \norm{\Cd{{c_t}}f(x_t)}_2^2 &=\sum_{i=1}^k (\Cd{{c_t^{(i)}}}f_{i,x_t}(x_t^{(i)}))^2\\
        &\leq(K_2|\lambda_t| + |1-\lambda_tK_1|)^2\norm{\nabla f(x_t)}_2^2.
    \end{align*}
    For bounding the second term (note $|\cdot|$ is taken element-wise, $\eta_t\geq0$ is assumed):
    \begin{align*}
        -2\eta_t\dotp{\Cd{{c_t}}f(x_t)}{(x_t-x^*)}&\leq 2\eta_tK_2|\lambda_t|\dotp{|\nabla f(x_t)|}{|x_t-x^*|}\\
        &\qquad-2\eta_t(1-\lambda_tK_1)\dotp{\nabla f(x_t)}{x_t-x^*}\\
        &\leq 2\eta_tK_2|\lambda_t|[\norm{\nabla f(x_t)}_2\norm{x_t-x^*}_2]\\
        &\qquad-2\eta_t(1-\lambda_tK_1)\dotp{\nabla f(x_t)}{x_t-x^*}\\
        &\leq \frac{2\eta_tK_2}{\mu}|\lambda_t|\norm{\nabla f(x_t)}_2^2-2\eta_t(1-\lambda_tK_1)\dotp{\nabla f(x_t)}{x_t-x^*}.
    \end{align*}
    We see that $1-\lambda_tK_1>0$ is necessary for proving convergence since we need the latter term to be negative to prove convergence.  We bound the second term like in the single dimensional case as: 
    \begin{align*}
        -2\eta_t(1-\lambda_tK_1)\dotp{\nabla f(x_t)}{x_t-x^*}&\leq -\frac{\phi\eta_t}{L}(1-\lambda_tK_1)\norm{\nabla f(x_t)}_2^2\\
        &\qquad-(2-\phi)\eta_t\mu (1-\lambda_tK_1)\norm{x_t-x^*}_2^2.
    \end{align*}
    Gathering all terms yields:
    \begin{align*}
        \norm{x_{t+1}-x^*}_2^2&\leq\eta_t^2(K_2|\lambda_t| + 1-\lambda_tK_1)^2\norm{\nabla f(x_t)}_2^2+\frac{2\eta_tK_2}{\mu}|\lambda_t|\norm{\nabla f(x_t)}_2^2\\
        &\qquad-\frac{\phi\eta_t}{L}(1-\lambda_tK_1)\norm{\nabla f(x_t)}_2^2-(2-\phi)\eta_t\mu (1-\lambda_tK_1)\norm{x_t-x^*}_2^2\\
        &\qquad+\norm{x_t-x^*}_2^2.
    \end{align*}
    For the 3rd term on the RHS to dominate the first two terms, the learning rate is:
    \begin{align*}
        &\eta_t^2(K_2|\lambda_t| + 1-\lambda_tK_1)^2 + \frac{2\eta_tK_2}{\mu}|\lambda_t|\leq \frac{\phi\eta_t}{L}(1-\lambda_tK_1)\\
        \implies& \eta_t(K_2|\lambda_t| + 1-\lambda_tK_1)^2\leq \frac{\phi}{L}(1-\lambda_tK_1)-\frac{2K_2|\lambda_t|}{\mu}\\
        \implies& \eta_t =  \frac{\frac{\phi}{L}(1-K_1\lambda_t)-\frac{2K_2|\lambda_t|}{\mu}}{(1-K_1\lambda_t+K_2|\lambda_t|)^2}.
    \end{align*}
    This in turn gives a condition on $\lambda_t$ since the numerator needs to be strictly greater than $0$ for this to make sense:
    \begin{align*}
        &\frac{\phi(1-K_1\lambda_t)}{L}-\frac{2K_2|\lambda_t|}{\mu}>0.
    \end{align*}
    We see that this new condition is consistent with the past condition that $(1-\lambda_t K_1)>0$.  Finally, we can write down the rate of linear convergence:
    \begin{align*}
        \norm{x_{t+1}-x^*}_2^2&\leq (1-(2-\phi)\mu(1-K_1\lambda_t)\eta_t)\norm{x_t-x^*}_2^2\\
        &=\left[1-\frac{(2-\phi)\mu(1-K_1\lambda_t)\left[\frac{\phi}{L}(1-K_1\lambda_t)-\frac{2K_2|\lambda_t|}{\mu}\right]}{(1-K_1\lambda_t+K_2|\lambda_t|)^2}\right]\norm{x_t-x^*}_2^2.
    \end{align*}
    Similar to the proof of Theorem~\ref{thm:beta_unified_low} in Appendix~\ref{Low_Unified_Proof}, this is a linear rate of convergence due to $\epsilon$ and this rate is at best as good as gradient descent since $K_2|\lambda_t|\geq0$.
\end{proof}
\section{Missing Proofs in Section~\ref{Smooth-Optimization} Smooth and Convex Optimization}
\subsection{Proof of Theorem~\ref{thm:smooth_separable}}\label{Smooth_Low_Proof}
\begin{proof}
    By similar reasoning as Corollary~\ref{cor:beta_unified_low}, we can reduce to the single dimensional case.  We start by applying $L$-smoothness.
    \begin{align*}
        f(x_{t+1})-f(x_t)&\leq f'(x_t)(x_{t+1}-x_t)+\frac{L}{2}(x_{t+1}-x_t)^2\\
        &=-\eta f'(x_t)\Cd{{c_t}}f(x_t)+\frac{L\eta^2}{2}(\Cd{{c_t}}f(x_t))^2.
    \end{align*}
    We bound the first term as:
    \begin{align*}
        -\eta f'(x_t)\Cd{{c_t}}f(x_t)&\leq -\eta f'(x_t)^2+\eta\lambda K_1f'(x_t)^2+\eta|\lambda|K_2f'(x_t)^2\\
        &=-\eta(1-\lambda K_1-|\lambda|K_2)f'(x_t)^2.
    \end{align*}
    We bound the second term as:
    \begin{align*}
        (\Cd{{c_t}}f(x_t))^2\leq(|1-\lambda K_1|+|\lambda|K_2)^2f'(x_t)^2.
    \end{align*}
    Putting everything together yields:
    \begin{align*}
        f(x_{t+1})-f(x_t)\leq(-\eta(1-\lambda K_1-|\lambda|K_2)+\frac{L\eta^2}{2}(|1-\lambda K_1|+|\lambda|K_2)^2)f'(x_t)^2.
    \end{align*}
    For this to converge, we need the RHS to be negative which means that $1-\lambda K_1-|\lambda|K_2>0$.  Therefore, $1-\lambda K_1>|\lambda|K_2>0$.  Now, we use 3 point identity to proceed.  Note this is where the case $p\neq1$ breaks down since the $L^{1+p}$ norm is not induced by an inner product if $p\neq1$.
    \begin{align*}
        (x_{t+1}-x^*)^2 &= (x_{t+1}-x_t)^2+2(x_{t+1}-x_t)(x_t-x^*)+(x_t-x^*)^2\\
        &=\eta^2(\Cd{{c_t}}f(x_t))^2-2\eta\Cd{{c_t}}f(x_t)(x_t-x^*)+(x_t-x^*)^2.
    \end{align*}
    We now bound the middle term on the RHS.  Note that the following only works due to $f$ being convex and single dimensional.
    \begin{align*}
        -2\eta\Cd{{c_t}}f(x_t)(x_t-x^*)&\leq 2\eta |\lambda|K_2|f'(x_t)||x_t-x^*|-2\eta f'(x_t)(1-\lambda K_1)(x_t-x^*)\\
        &= 2\eta |\lambda|K_2(f'(x_t))(x_t-x^*)-2\eta f'(x_t)(1-\lambda K_1)(x_t-x^*)\\
        &= -2\eta(1-\lambda K_1-|\lambda|K_2) f'(x_t)(x_t-x^*)\\
        &\leq -2\eta(1-\lambda K_1 - |\lambda|K_2)(f(x_t)-f(x^*)).
    \end{align*}
    Putting everything together gives a bound on $f(x_t)-f(x^*)$.
    \begin{align*}
        f(x_t)-f(x^*)\leq \frac{1}{2\eta(1-\lambda K_1 - |\lambda|K_2)}((x_t-x^*)^2-(x_{t+1}-x^*)^2)+\frac{\eta(1-\lambda K_1+|\lambda|K_2)^2}{2(1-\lambda K_1 - |\lambda|K_2)}(f'(x_t))^2.
    \end{align*}
    Combining this with the bound on $f(x_{t+1})-f(x_t)$ yields:
    \begin{align*}
        f(x_{t+1})-f(x^*)&\leq \frac{1}{2\eta(1-\lambda K_1 - |\lambda|K_2)}((x_t-x^*)^2-(x_{t+1}-x^*)^2)\\
        &\qquad+\eta\left[\frac{(1-\lambda K_1+|\lambda|K_2)^2}{2(1-\lambda K_1 - |\lambda|K_2)}-(1-\lambda K_1-|\lambda|K_2)+\frac{L\eta}{2}(1-\lambda K_1+|\lambda|K_2)^2\right](f'(x_t))^2.
    \end{align*}
    Now, we derive the learning rate $\eta$ as follows so that the $f'(x_t)^2$ terms vanish.
    \begin{align*}
        &\frac{L\eta}{2}(1-\lambda K_1+|\lambda|K_2)^2 = (1-\lambda K_1-|\lambda|K_2)-\frac{(1-\lambda K_1+|\lambda|K_2)^2}{2(1-\lambda K_1 - |\lambda|K_2)}\\
        \implies& \eta = \frac{1}{L}\left[\frac{2(1-\lambda K_1-|\lambda|K_2)}{(1-\lambda K_1+|\lambda|K_2)^2}-\frac{1}{1-\lambda K_1-|\lambda|K_2}\right].
    \end{align*}
    We require this learning rate to be positive (since this was assumed throughout the proof) so we get a condition on $\lambda$.
    \begin{align*}
        &\left[\frac{2(1-\lambda K_1-|\lambda|K_2)}{1-\lambda K_1+|\lambda|K_2)^2}-\frac{1}{1-\lambda K_1-|\lambda|K_2}\right]>0\\
        \implies& \left(\frac{1-\lambda K_1-|\lambda|K_2}{1-\lambda K_1+|\lambda|K_2}\right)^2>\frac{1}{2}\\
        \implies&(1-\lambda K_1)(1-\frac{1}{\sqrt{2}})-|\lambda|K_2(1+\frac{1}{\sqrt{2}})>0\\
        \implies&1-\lambda K_1 > |\lambda|K_2\frac{\sqrt{2}+1}{\sqrt{2}-1}.
    \end{align*}
    We can now write the earlier bound as:
    \begin{align*}
        f(x_{t+1})-f(x^*)\leq \frac{L}{\left(4\left(\frac{1-\lambda K_1-|\lambda|K_2}{1-\lambda K_1+|\lambda|K_2}\right)^2-2\right)}((x_t-x^*)^2-(x_{t+1}-x^*)^2).
    \end{align*}
    Applying telescope sum and bounding the LHS using convexity gives the desired result:
    \begin{align*}
        f(\bar{x_{T}})-f(x^*)\leq \frac{L(x_0-x^*)^2}{\left(4\left(\frac{1-\lambda K_1-|\lambda|K_2}{1-\lambda K_1+|\lambda|K_2}\right)^2-2\right)T}.
    \end{align*}
    This rate is at best the same as standard gradient descent since $|\lambda|K_2\geq0$.
\end{proof}
\subsection{Proof of Theorem~\ref{thm:smooth}}\label{Smooth_Proof}
\begin{proof}
    We begin with $L$-smoothness.
    \begin{align*}
        f(x_{t+1})-f(x_t)&\leq \dotp{\nabla f(x_t)}{x_{t+1}-x_t}+\frac{L}{2}\norm{x_{t+1}-x_t}_2^2\\
        &=-\eta_t \dotp{\nabla f(x_t)}{\Cd{{c_t}}f(x_t)}+\frac{L\eta_t^2}{2}\norm{\Cd{{c_t}}f(x_t)}_2^2.
    \end{align*}
    We bound the first term as:
    \begin{align*}
        -\eta_t\dotp{\nabla f(x_t)}{\Cd{{c_t}}f(x_t)}&\leq -\eta_t(1-\lambda_t K_1-|\lambda_t|K_2)\norm{\nabla f(x_t)}_2^2.
    \end{align*}
    We bound the second term as:
    \begin{align*}
        \norm{\Cd{{c_t}}f(x_t)}_2^2\leq(|1-\lambda_t K_1|+|\lambda_t|K_2)^2\norm{\nabla f(x_t)}_2^2.
    \end{align*}
    Putting everything together yields:
    \begin{align*}
        f(x_{t+1})-f(x_t)\leq(-\eta_t(1-\lambda_t K_1-|\lambda_t|K_2)+\frac{L\eta_t^2}{2}(|1-\lambda_t K_1|+|\lambda_t|K_2)^2)\norm{\nabla f(x_t)}_2^2.
    \end{align*}
    For this to converge, we need the RHS to be negative which means that $1-\lambda_t K_1>|\lambda_t|K_2>0$.  Now, we use 3 point identity to proceed.
    \begin{align*}
        \norm{x_{t+1}-x^*}_2^2&=\norm{x_{t+1}-x_t}_2^2+2\dotp{x_{t+1}-x_t}{x_t-x^*}+\norm{x_t-x^*}_2^2\\
        &=\eta_t^2\norm{\Cd{{c_t}}f(x_t)}_2^2-2\eta_t\dotp{\Cd{{c_t}}f(x_t)}{x_t-x^*}+\norm{x_t-x^*}_2^2.
    \end{align*}
    For bounding the second term (note $|\cdot|$ is taken element-wise, $\eta_t\geq0$ is assumed):
    \begin{align*}
        -2\eta_t\dotp{\Cd{{c_t}}f(x_t)}{(x_t-x^*)}&\leq 2\eta_tK_2|\lambda_t|\dotp{|\nabla f(x_t)|}{|x_t-x^*|}-2\eta_t(1-\lambda_tK_1)\dotp{\nabla f(x_t)}{x_t-x^*}\\
        &\leq 2\eta_tK_2|\lambda_t|\norm{\nabla f(x_t)}_2\norm{x_t-x^*}_2-2\eta_t(1-\lambda_tK_1)\dotp{\nabla f(x_t)}{x_t-x^*}\\
        &\leq 2\eta_tK_2|\lambda_t|L\norm{x_t-x^*}^2_2-2\eta_t(1-\lambda_tK_1)(f(x_t)-f(x^*)).\\
    \end{align*}
    Putting everything together gives a bound on $f(x_t)-f(x^*)$.
    \begin{align*}
        f(x_t)-f(x^*)&\leq \frac{1}{2\eta_t(1-\lambda_t K_1)}(\norm{x_t-x^*}_2^2-\norm{x_{t+1}-x^*}_2^2)+\frac{|\lambda_t|K_2L}{1-\lambda_tK_1}\norm{x_t-x^*}_2^2\\
        &\qquad+\frac{\eta_t(1-\lambda_t K_1+|\lambda_t|K_2)^2}{2(1-\lambda_t K_1)}\norm{\nabla f(x_t)}_2^2.
    \end{align*}
    Combining this with the bound on $f(x_{t+1})-f(x_t)$ yields:
    \begin{align*}
        f(x_{t+1})-f(x^*)&\leq \frac{1}{2\eta_t(1-\lambda_t K_1)}(\norm{x_t-x^*}_2^2-\norm{x_{t+1}-x^*}_2^2)+\frac{|\lambda_t|K_2L}{1-\lambda_tK_1}\norm{x_t-x^*}_2^2\\
        &\qquad+\eta_t\Bigg[\frac{(1-\lambda_t K_1+|\lambda_t|K_2)^2}{2(1-\lambda_t K_1)}-(1-\lambda_t K_1-|\lambda_t|K_2)\\
        &\qquad+\frac{L\eta_t}{2}(1-\lambda_t K_1+|\lambda_t|K_2)^2\Bigg]\norm{\nabla f(x_t)}_2^2.
    \end{align*}
    Solving for $\eta_t$ such that the $\norm{\nabla f(x_t)}_2^2$ terms vanish yields:
    \begin{align*}
        \eta_t = \frac{1}{L}\left[\frac{2(1-\lambda_t K_1-|\lambda_t|K_2)}{(1-\lambda_t K_1+|\lambda_t|K_2)^2}-\frac{1}{1-\lambda_t K_1}\right].
    \end{align*}
    The proof thus far assumes that $\eta_t>0$ which creates a constraint on $\lambda_t$.
    \begin{align*}
        &\left[\frac{2(1-\lambda_t K_1-|\lambda_t|K_2)}{(1-\lambda_t K_1+|\lambda_t|K_2)^2}-\frac{1}{1-\lambda_t K_1}\right]>0\\
        \implies& \frac{(1-\lambda_t K_1-|\lambda_t|K_2)(1-\lambda_tK_1)}{(1-\lambda_t K_1+|\lambda_t|K_2)^2}>\frac{1}{2}\\
        \implies&(1-\lambda_tK_1)^2-4|\lambda_t|K_2(1-\lambda_tK_1)-|\lambda_t|^2K_2^2>0\\
        \implies&(1-\lambda_tK_1-2|\lambda_t|K_2)^2>5|\lambda_t|^2K_2^2\\
        \implies&1-\lambda_tK_1>(\sqrt{5}+2)|\lambda_t|K_2.
    \end{align*}
    Now, suppose $1-\lambda_tK_1 = s_t|\lambda_t|K_2$ ($s_t>\sqrt{5}+2)$.  Then the following useful equation holds:
    \begin{align*}
        &\eta_t = \frac{1}{L}\left[\frac{2(s_t-1)}{(s_t+1)^2|\lambda_t|K_2}-\frac{1}{s_t|\lambda_t|K_2}\right]\\
        \implies& \eta_t = \frac{1}{Ls_t|\lambda_t|K_2}\left[\frac{s_t^2-4s_t-1}{(s_t+1)^2}\right]\\
        \implies& s_t\eta_t|\lambda_t|K_2 = \frac{1}{L}\left[\frac{s_t^2-4s_t-1}{(s_t+1)^2}\right].
    \end{align*}
    Returning to the bound on $f(x_{t+1})-f(x^*)$, with the chosen $\eta_t$, we have:
    \begin{align*}
        f(x_{t+1})-f(x^*)&\leq \left(\frac{1}{2\eta_t(1-\lambda_t K_1)}+\frac{|\lambda_t|K_2L}{1-\lambda_tK_1}\right)\norm{x_t-x^*}_2^2-\frac{1}{2\eta_t(1-\lambda_t K_1)}\norm{x_{t+1}-x^*}_2^2.
    \end{align*}
    For this sum to telescope, we need the following condition to hold:
    \begin{align*}
        &\frac{1}{2\eta_{t+1}(1-\lambda_{t+1} K_1)}+\frac{|\lambda_{t+1}|K_2L}{1-\lambda_{t+1}K_1}\leq \frac{1}{2\eta_t(1-\lambda_t K_1)}\\
        \implies&\frac{1}{2\eta_{t+1}s_{t+1}|\lambda_{t+1}|K_2}+\frac{|\lambda_{t+1}|K_2L}{s_{t+1}|\lambda_{t+1}|K_2}\leq \frac{1}{2\eta_t(s_{t}|\lambda_{t}|K_2)}\\
        \implies&\frac{(s_{t+1}+1)^2}{s_{t+1}^2-4s_{t+1}-1}+\frac{2}{s_{t+1}}\leq \frac{(s_{t}+1)^2}{s_{t}^2-4s_{t}-1}.
    \end{align*}
    For $s_t>\sqrt{5}+2$, both the LHS and RHS are decreasing functions that going from $\infty\to1$ as $s_t\to\infty$.  Thus if $s_{t+1}$ is chosen large enough, it will satisfy the telescoping condition.  To solve for an exact cutoff would require solving a cubic equation so for practical use it is better to use a numerical solver.  Now, assuming this condition holds, applying telescope sum to the $f(x_{t+1})-f(x^*)$ inequality yields the desired result:
    \begin{align*}
        f(\bar{x_T})-f(x^*)&\leq\left(\frac{1}{2\eta_0(1-\lambda_0 K_1)}+\frac{|\lambda_0|K_2L}{1-\lambda_0K_1}\right)\frac{\norm{x_0-x^*}_2^2}{T}\\
        &=\frac{L}{2}\left[\frac{(s_{0}+1)^2}{s_{0}^2-4s_{0}-1}+\frac{2}{s_0}\right]\frac{\norm{x_0-x^*}_2^2}{T}.
    \end{align*}
\end{proof}
\section{Missing Proofs in Section~\ref{Smooth-Non-Convex-Optimization} Smooth and Non-Convex Optimization}
\subsection{Proof of Theorem~\ref{thm:Lp_smooth}}\label{Lp_Proof}
\begin{proof}
    Throughout the proof we will use the notation $[a_i]$ to denote a vector of $k$ elements with $i$th element $a_i$.  We note that all the results for single variable $f$ hold since $f$ satisfies the single variable $(L, p)$-H\"older smooth definition in each component.  We start with the $(L, p)$-H\"older smooth property:
    \allowdisplaybreaks
    \begin{align*}
        f(x_{t+1})-f(x_t)&\leq \dotp{\nabla f(x_t)}{x_{t+1}-x_{t}}+\frac{L}{1+p}\norm{x_{t+1}-x_t}^{p+1}_{p+1}\\
        &=-\eta\dotp{\nabla f(x_t)}{\Cdp{{c_t}}f(x_t)}+\frac{L\eta^{p+1}}{1+p}\norm{\Cdp{{c_t}}f(x_t)}^{p+1}_{p+1}\\
        &\leq -\eta\inner{\left[\frac{\partial f}{\partial x^{(i)}}(x_t)\right]}{\left[\frac{\partial f}{\partial x^{(i)}}(x_t)|x_t^{(i)}-c_t^{(i)}|^{1-p}\right]}\\
        &\qquad+\eta\inner{\left[\left|\frac{\partial f}{\partial x^{(i)}}(x_t)\right|\right]}{K|x_t^{(i)}-c_t^{(i)}|}+\frac{L\eta^{p+1}}{1+p}\norm{\Cdp{{c_t}}f(x_t)}^{p+1}_{p+1}\\
        &= -\eta(\lambda^{1-p}-K\lambda)\sum_{i=1}^k\left|\frac{\partial f}{\partial x^{(i)}}(x_t)\right|^{1+1/p}+\frac{L\eta^{p+1}}{1+p}\norm{\Cdp{{c_t}}f(x_t)}^{p+1}_{p+1}\\
        &= -\eta(\lambda^{1-p}-K\lambda)\norm{\nabla f(x_t)}_{1+1/p}^{1+1/p}+\frac{L\eta^{p+1}}{1+p}\norm{\Cdp{{c_t}}f(x_t)}^{p+1}_{p+1}\\
        &\leq -\eta(\lambda^{1-p}-K\lambda)\norm{\nabla f(x_t)}_{1+1/p}^{1+1/p}\\
        &\qquad+\frac{L\eta^{p+1}}{1+p}\bignorm{\left[\left|\frac{\partial f}{\partial x^{(i)}}(x_t)|x_t^{(i)}-c_t^{(i)}|^{1-p}\right|+K|x_t^{(i)}-c_t^{(i)}|\right]}^{p+1}_{p+1}\\
        &\leq -\eta(\lambda^{1-p}-K\lambda)\norm{\nabla f(x_t)}_{1+1/p}^{1+1/p}\\
        &\qquad+\frac{L\eta^{p+1}}{1+p}(\lambda^{1-p}+K\lambda)^{p+1}\bignorm{\left[\left|\frac{\partial f}{\partial x^{(i)}}(x_t)\right|^{1/p}\right]}^{p+1}_{p+1}\\
        &= -\eta(\lambda^{1-p}-K\lambda)\norm{\nabla f(x_t)}_{1+1/p}^{1+1/p}+\frac{L\eta^{p+1}}{1+p}(\lambda^{1-p}+K\lambda)^{p+1}\norm{\nabla f(x_t)}^{1+1/p}_{1+1/p}\\
        &= -\eta(\lambda^{1-p}-K\lambda)\norm{\nabla f(x_t)}_{1+1/p}^{1+1/p}+\frac{L\eta^{p+1}}{1+p}(\lambda^{1-p}+K\lambda)^{p+1}\norm{\nabla f(x_t)}^{1+1/p}_{1+1/p}\\
        &= -\psi\norm{\nabla f(x_t)}_{1+1/p}^{1+1/p}.
    \end{align*}
    For this to converge, we need $\psi>0$ which gives a condition on $\eta$ as follows:
    \begin{align*}
        &\lambda^{1-p}-K\lambda-\frac{L}{1+p}\eta^p(\lambda^{1-p}+K\lambda)^{1+p}>0\\
        \implies&\eta^p<\frac{(1+p)(\lambda^{1-p}-K\lambda)}{L(\lambda^{1-p}+K\lambda)^{1+p}}\\
        \implies&\eta<\sqrt[\leftroot{-2}\uproot{3}p]{\frac{(1+p)(\lambda^{1-p}-K\lambda)}{L(\lambda^{1-p}+K\lambda)^{1+p}}}.
    \end{align*}
    This in turn gives a condition on $\lambda$ in order to make the interior of the root positive:
    \begin{align*}
        &(\lambda^{1-p}-K\lambda)>0\\
        \implies &\lambda^p<\frac{1}{K}\\
        \implies &\lambda < \sqrt[\leftroot{-2}\uproot{3}p]{\frac{1}{K}}.
    \end{align*}
    We derive the convergence bound as follows:
    \begin{align*}
        \min_{0\leq t\leq T}\norm{\nabla f(x_t)}^{1+1/p}_{1+1/p}&\leq \frac{1}{T+1}\sum_{t=0}^T\norm{\nabla f(x_t)}^{1+1/p}_{1+1/p}\\
        &\leq \sum_{t=0}^T \frac{f(x_t)-f(x_{t+1})}{(T+1)\psi} \\
        &= \frac{f(x_0)-f(x_{T+1})}{(T+1)\psi}\\
        &\leq \frac{f(x_0)-f^*}{(T+1)\psi}.
    \end{align*}
\end{proof}
\section{Missing Proofs in Section~\ref{advantage} Finding the Advantage of Fractional Gradient Descent}
\subsection{Proof of Theorem~\ref{thm:quadratic}}\label{quadratic_proof}
\begin{proof}
    We first note some useful equations:
    \begin{enumerate}
        \item $\nabla f(x) = \matr{A}x+b$
        \item $\Cd{{c}}x = 1$ (for 1 dimensional x)
        \item $\Cd{{c}}b^Tx = b$
        \item $\Cd{{c}}\frac{a}{2}x^2 = a(\beta-\gamma)(x-c)+ax$ (for 1 dimensional x)
        \item $\Cd{{c}}1 = 0$
    \end{enumerate}
    Equation 1 is simple differentiation.  Equation 2 follows since the 2nd derivative of $x$ is 0.  Equation 3 follows by linearity of the fractional gradient descent operator and equation 2.  Equation 4 follows directly from Lemma A.1 of \cite{shin2021caputo}.  Equation 5 follows since derivatives of a constant function are 0.

    We now leverage these equations to apply the fractional gradient descent operator to $f$.  These equations tell us that the only terms where the fractional gradient descent operator does not act like a normal gradient are the squares corresponding to diagonal elements of $\matr{A}$, $a_{ii}$.  These elements look precisely like equation 4 and can be handled as follows:
    \begin{align*}
        \Cd{{c^{(i)}}}\frac{a_{ii}}{2}\left(x^{(i)}\right)^2 &= a_{ii}(\beta-\gamma)(x^{(i)}-c^{(i)})+a_{ii}x^{(i)}\\
        &=a_{ii}(\beta-\gamma)\left(-\frac{\Delta}{(\beta-\gamma)L}(\nabla f(x))^{(i)}\right)+a_{ii}x^{(i)}\\
        &=-a_{ii}\left(\frac{\Delta}{L}(\matr{A}x+b)^{(i)}\right)+\frac{d}{dx^{(i)}}\left(\frac{a_{ii}}{2}\left(x^{(i)}\right)^2\right).
    \end{align*}
    Putting everything together with linearity we calculate the full fractional gradient descent operator as follows:
    \begin{align*}
        \Cd{{c}}f(x) &= \diag([-a_{11}\frac{\Delta}{L},...,-a_{kk}\frac{\Delta}{L}])(\matr{A}x+b)+\nabla f(x)\\
        &= \diag([-a_{11}\frac{\Delta}{L},...,-a_{kk}\frac{\Delta}{L}])(\matr{A}x+b)+(\matr{A}x+b)\\
        &= (\matr{D}\matr{A})x+\matr{D}b\\
        &= \matr{A'}x+b'.
    \end{align*}
    Thus we have the desired result for point 1 of the theorem.  For point 2, we start by noting that $\Cd{{c}}f(x^*) = 0$ when $\matr{D}\matr{A}x^* = \matr{D}b$.  Since $\matr{A}$ and $\matr{D}$ are both invertible, we see that $x^* = \matr{A^{-1}}b$.  At this point, we also have $\nabla f(x^*) = 0$.  This tells us that if fractional gradient descent converges, it converges to the right point.  
    
    The update rule for fractional gradient descent can be written as $x_{t+1} = x_t-\eta(\matr{A'}x_t+b')$.  For proving convergence, we begin with the 3 point equality:
    \begin{align*}
        \norm{x_{t+1}-x^*}^2_2 &= \norm{x_{t+1}-x_t}^2_2+2 \dotp{x_{t+1}-x_t}{x_{t}-x^*} + \norm{x_{t}-x^*}^2_2\\
        &=\eta^2\norm{\matr{A'}x_t+b'}^2_2-2\eta\dotp{\matr{A'}x_t+b'}{x_{t}-x^*} + \norm{x_{t}-x^*}^2_2\\
        &=\eta^2\norm{\matr{A'}x_t+b'}^2_2-2\eta\dotp{\matr{A'}x_t+b'-\matr{A'}x^*-b'}{x_{t}-x^*} + \norm{x_{t}-x^*}^2_2\\
        &=\eta^2\norm{\matr{A'}x_t+b'}^2_2-2\eta (x_t-x^*)^T\matr{A'}(x_{t}-x^*) + \norm{x_{t}-x^*}^2_2\\
        &\leq\eta^2\norm{\matr{A'}x_t+b'}^2_2-\eta\mu' \norm{x_t-x^*}_2^2 - \eta (x_t-x^*)^T(\matr{A'}x_{t}+b'-\matr{A'}x^*-b') + \norm{x_{t}-x^*}^2_2\\
        &=\eta^2\norm{\matr{A'}x_t+b'}^2_2-\eta\mu' \norm{x_t-x^*}_2^2-\eta (\matr{A}x_t+b'-\matr{A}x^*-b')^T\matr{{A'^{-1}}^T}(\matr{A'}x_{t}+b') + \norm{x_{t}-x^*}^2_2\\
        &=\eta^2\norm{\matr{A'}x_t+b'}^2_2-\eta\mu' \norm{x_t-x^*}_2^2 - \eta (\matr{A}x_t+b')^T\matr{{A'^{-1}}^T}(\matr{A'}x_{t}+b') + \norm{x_{t}-x^*}^2_2\\
        &\leq\eta^2\norm{\matr{A'}x_t+b'}^2_2-\eta\mu' \norm{x_t-x^*}_2^2 - \frac{\eta}{L'}\norm{\matr{A}x_t+b'}^2_2+ \norm{x_{t}-x^*}^2_2\\
        &=\left(\eta^2-\frac{\eta}{L'}\right)\norm{\matr{A'}x_t+b'}^2_2 + (1-\eta\mu')\norm{x_{t}-x^*}^2_2.
    \end{align*}
    Setting $\eta = \frac{1}{L'}$ similarly to in standard gradient descent, we arrive at the desired convergence rate:
    \begin{align*}
        \norm{x_{t+1}-x^*}^2_2\leq\left[1-\frac{\mu'}{L'}\right]\norm{x_{t}-x^*}^2_2.
    \end{align*}
\end{proof}
\end{document}